\documentclass[10pt,reqno]{amsart}
\usepackage{amsmath, latexsym, amsfonts, amssymb,amsthm, amscd,epsfig,enumerate}
\usepackage{color, float, caption, subcaption}

\pagestyle{plain}

\advance\hoffset -.75cm

\oddsidemargin=.4cm
\evensidemargin=.4cm
\textwidth=16.5cm
\textheight=22cm
\usepackage[usesnames]{xcolor}

\definecolor{refkey}{gray}{.75}
\definecolor{labelkey}{gray}{.75}


\newcommand{\Z}{\mathbb Z}
\newcommand{\N}{\mathbb N}
\newcommand{\Prob}{\mathbb P}
\newcommand{\E}{\mathbb E}

\newcommand{\cB}{\mathcal{B}}

\newcommand{\pr}{\mathbb P}

\newcommand{\ident}{{\mathchoice {\rm 1\mskip-4mu l} {\rm 1\mskip-4mu l}
{\rm 1\mskip-4.5mu l} {\rm 1\mskip-5mu l}}}


\newtheorem{teo}{Theorem}[section]
\newtheorem{lem}[teo]{Lemma}
\newtheorem{cor}[teo]{Corollary}

\newtheorem{pro}[teo]{Proposition}
\newtheorem{defn}[teo]{Definition}

\setlength{\topmargin}{-0.35in}
\setlength{\textheight}{8.5in}   
\setlength{\textwidth}{6.0in}    
\setlength{\oddsidemargin}{0.5in}
\setlength{\evensidemargin}{0.5in}
\setlength{\headheight}{26pt}
\setlength{\headsep}{8pt}

\title
{A stochastic model for the evolution of species with random fitness}

\author[D.~Bertacchi]{Daniela Bertacchi}
\address{D.~Bertacchi, Dipartimento di Matematica e Applicazioni,
Universit\`a di Milano--Bicocca,
via Cozzi 53, 20125 Milano, Italy.}
\email{daniela.bertacchi\@@unimib.it}

\author[J.~Lember]{Juri Lember}
\address{J.~lember, Institute of Mathematics and Statistics,
University of Tartu,
J. Liiv 2, 50409 Tartu, Estonia.}
\email{juri.lember\@@ut.ee}

%

\author[F.~Zucca]{Fabio Zucca}
\address{F.~Zucca, INdAM, GNAMPA, Piazzale Aldo Moro 5, 
000185 Roma, Italy.}
\email{fabiozucca.math\@@gmail.com}


\begin{document}

\begin{abstract}
We generalize the evolution model introduced by Guiol, Machado and Schinazi (2010).
In our model at odd times a random number $X$ of species is created.
Each species is endowed with a random
fitness with arbitrary distribution on $[0,1]$. 
At even times a random number $Y$ of species is removed, killing the species 
with lower fitness. We show that there is a critical fitness $f_c$
below which the number of species hits zero i.o.~and above of which
this number goes to infinity. We prove uniform convergence for the distribution
of surviving species and describe the phenomena which could not be observed
in previous works with uniformly distributed fitness.
\end{abstract}


\maketitle
\noindent {\bf Keywords}: 
 birth and death process, 
branching process, 
survival, 
fitness, 
queuing process, shape distribution.

\noindent {\bf AMS subject classification}:  60J20, 60J80, 60J15. 

\section{Introduction}
\label{sec:intro}

During the history of our planet, species have emerged and have become extinct,
some have lasted a relatively brief period, others are still present in a more 
or less unchanged form after millions of years.
It is widely accepted that the driving engine of evolution is natural
 selection or ``survival of the fittest''. It is therefore interesting
 to provide mathematical models for the evolution of species.
 
Guiol, Machado and Schinazi \cite{cf:Schi} proposed a model where 
creation and deletion of species is driven by chance in the sense that 
at each step
with probability $p$ one new species is created and its fitness is
chosen uniformly in $[0,1]$, while with  probability $1-p$ the least fit species
(if there are species alive at that time) is removed.
One motivation for the study of this model is that its long-term behaviour
is similar to the one which simulations show for the Bak-Sneppen model:
there is a critical value for the fitness and species with smaller fitness
disappear, while species will a larger fitness persist indefinitely.
Bak and Sneppen \cite{cf:baksneppen} modelled a simple ecosystem where the population
size is constant and at each step not only the least fit is removed, but also 
its neighbours are replaced by new species (proximity may be seen as representing
ecological links between species). It has proven difficult to obtain rigorous results
for this model (see for instance \cite{cf:Meester}) and this motivates the search for
similar, more tractable models.

Several papers have studied the GMS model:
\cite{cf:ben-ari} gives a law of the iterated logarithm
and a central limit theorem for number of species with supercritical fitness
which go extinct (this number is negligible with respect to $n$); 
\cite{cf:Mach} studies the maximal fitness ever appeared in the subcritical case.

The model has been generalized in \cite{cf:Volk} and \cite{cf:ben-ari2}:
there is still a toss of a coin to decide for creation or deletion, but 
instead of adding/removing one species at a time, increments 
 are arbitrary random variables.
 Even with these assumptions, the same cut-off phenomenon of \cite{cf:Schi}
appears.

In the original GMS, the lengths of subsequent births and deaths are geometrically distributed random variables (with parameters which sum up
to 1)
and in \cite{cf:ben-ari2, cf:Volk} they are geometrical convolutions of certain laws (where the parameters of these
geometrically distributed number of convolutions, again, sum up to 1). In our model we group all subsequent creations and deletions:
the length of subsequent creations
$\{X_n\}_{n \in \N}$ and the length of subsequent annihilations 
$\{Y_n\}_{n \in \N}$ are such that $\{(X_n, Y_n)\}_{n \in \N}$ is an i.i.d.~sequence with arbitrary
distribution. Whence our results apply to the models in \cite{cf:ben-ari2,cf:Schi,cf:Volk} (see Section~\ref{subsec:previous}).  
Besides, in the older papers the fitness is assigned uniformly while we use a general distribution $\mu$. 
If $\mu$ has atoms, a new phenomenon appears: there might be a fitness which acts as a barrier eventually protecting all
species with higher fitness (see Corollary~\ref{rem:dynamics} and the subsequent discussion for details).

Here is the outline of the paper. In Section~\ref{sec:basic} we give the formal construction
of the process and the necessary definitions. We state our main result, Theorem~\ref{th:genshape1}, which
describe the asymptotic expression of the proportion of species in a generic (Borel) range of fitness.
The asymptotic behavior of a single fitness is described by Theorem~\ref{th:gen1}. Corollary~\ref{rem:dynamics}
and the subsequent discussion gives some details on the number of species which are killed. Section~\ref{subsec:previous}
is devoted to a detailed comparison with previous works; we explain why our work is a generalization of the previous
models and which new phenomena arise. 

In Section~\ref{sec:examples} we study some examples: the original GMS with an atomic measure $\mu$ (see Section~\ref{subsec:original}),
a Markov model which cannot be treated by using 
previously known results (see Section~\ref{subsec:markov}) and  a model which is related to Branching Processes
(see Section~\ref{subsec:warm}). We also give a counterexample to be compared with Theorem~\ref{th:genshape1}(2).

All the proofs are in Section~\ref{sec:proofs} which contains a couple of results which are
worth mentioning: a \textit{Law of Large Numbers} (Proposition~\ref{pro:slln}) and Proposition~\ref{pro:typesofrecurrence}
which identifies the set of fitness which become empty i.o.~(and the total amount of time they are empty).

\section{The process and its asymptotic behaviour}
\label{sec:basic}

We start by giving a formal description of the process.

Let $\{X_n, Y_n, f_{n,i}\}_{n,i \in \N}$ be a family of nonnegative random variables and, for all $n \in \N$, 
denote by $f_n$ the sequence $\{f_{n,i}\}_{i \in \N}$. Suppose that 
\begin{enumerate}
 \item for every $n \in \N$, $(X_n, Y_n)$ and $f_{n}$ are independent, 
 \item $\{(X_n,Y_n, f_n)\}_{n \in \N}$ are i.i.d.
 \item all $f_{n,i}$ are distributed according to a measure $\mu$ on $\mathbb{R}$.
 \end{enumerate}
Roughly speaking, $X_n$ counts the new species at time $n$, $Y_n$ counts the deaths 
 and $f_{n,i}$ the fitness of a newly created species.
In order to avoid trivial cases we suppose that $\E[X_k]$ and $\E[Y_k]$ are both in $(0,+\infty]$; moreover 
we assume that at least one of these two expected values is finite.
Note that in this case $\{X_n\}_{n \in \N}$, $\{Y_n\}_{n \in \N}$ and $\{X_n-Y_n\}_{n \in \N}$ are all i.i.d.~families, but
$X_i$ and $Y_i$ might be dependent. 
From now on, we will denote by $(X,Y)$ a couple with the same law as $(X_1,Y_1)$.
For every fixed $n \in \N$ also $\{f_{n,i}\}_{i \in \N}$ might be dependent (for instance
they can be generated by a Markov Chain or $f_{n,1}=f_{n,i}$ for all $i \in \N$).

We will assume that $\mu([0,1])=1$;
there is no loss of generality, since any measure on $\mathbb{R}$ can be mapped to a measure supported in $[0,1]$.
We denote its cumulative distribution function by $F=F_\mu$ and we define $F(f^-):=\lim_{a \to f^-} F(a)$. 

Let $Z_n$ be the number of species alive at time $n$. We start at time $0$ with 
$Z_0=0$ ($Z_0$ could be a random variable with an arbitrary distribution on $\N$).

At time $1$, $X_1$ species are generated and to each of them we assign a random fitness with law $\mu$.
More precisely the fitness of the $i$-th created species is $f_{1,i}$ for all $1\le i\le X_1$.
Thus $Z_1=Z_0+X_1$. The procedure is repeated at any odd time: $Z_{2k+1}=Z_{2k}+X_{k+1}$, meaning that
$X_{k+1}$ species are created and their fitness $f_{k+1,1},\ldots,f_{k+1,X_{k+1}}$ are assigned.
For any set $A \subseteq [0,1]$ we denote by $Z_n(A)$ the total number of species
alive at time $n$ and with fitness in $A$. The fitness of a species does not change during its entire lifetime,
and species may disappear only at even times.

At time $2k+2$, a number $Y_{k+1}\wedge Z_{2k+1}$ of species are removed and removal starts from the least fit.
This means that $Z_{2k+2}=0\vee (Z_{2k+1}-Y_{k+1})$. Thus if $Y_{k+1}\ge Z_{2k+1}$ then $Z_{2k+2}(A)=0$
for all $A \subseteq [0,1]$. Otherwise,
let $x_-:=\max\{x \in [0,1]\colon Z_{2k+1}([0,x]) \le Y_{k+1} \}$ and  $x_+:=\min\{x \in [0,1]\colon Z_{2k+1}([0,x]) \ge Y_{k+1} \}$.
All species with fitness not larger than $x_-$ are removed and $Z_{2k+2}(A)=0$ for all $A\subseteq[0,x_+)$.
A number $M_{k+1}:=Y_{k+1}-Z_{2k+1}([0,x_-])$ of species
is removed from the set of species with fitness equal to $x_+$: $Z_{2k+2}(\{x_+\})=Z_{2k+1}(\{x_+\})-M_{k+1}$
and  $Z_{2k+2}(A)=Z_{2k+1}(A)$ for all $A\subseteq(x_+,1]$.

%


Given a Borel set $A \subseteq [0,1]$
such that $\mu(A)>0$,
we define the number of species created in $A$ as
\begin{equation}\label{eq:XYtilde}
\widetilde X_n= \sum_{i=1}^{X_n} \ident_{A}(f_{n,i}).
\end{equation}
 By our assumptions, for any $A$, we have
that  $\{(\widetilde X_n,Y_n)\}_{n \in \N}$ are i.i.d.~and $\E[\widetilde
X _n]=\mu(A)\E[X_n]$ (where $a\cdot(+\infty)= +\infty$, if $a > 0$ and
$0\cdot\infty=0$). 
Henceforth, an interval $I \subseteq [0,1]$ (either closed or not) such that $0 \in I$ is called a \textit{left interval}. 
We note that, for a left interval $I$ such that $\mu(I)>0$, 
$\{Z_{2n}(I)\}_{n \in \N}$ is the 
queuing process (see \cite[Chapter VI.9]{cf:Feller2})
associated to the i.i.d.~increments $\{\widetilde X_n-Y_n\}_{n \in \N}$ (see Section~\ref{sec:proofs} for details).

We will often make use of the expected value $\E[\alpha X-Y]$ where $\alpha \in [0,1]$.
If $\E[X]=+\infty>\E[Y]$ and $\alpha>0$ then $\E[\alpha X-Y]:=+\infty$;  
if $\E[Y]=+\infty>\E[X]$ then $\E[\alpha X-Y]:=-\infty$ for all $\alpha \in [0,1]$. 

We define the critical parameter:
\begin{equation}\label{eq:criticalf}
  \begin{split}
f_c&:= \inf\{f \in \mathbb{R} \colon F(f)> \E[Y]/\E[X]\} 
 \end{split}
\end{equation}
Note that 
when $\E[Y] \ge \E[X]$ then $f_c=+\infty$, otherwise
 $f_c$ is the only solution of
$F(f_c) \ge \E[Y]/\E[X] \ge F(f_c^-)$, where both inequalities turn into equalities if and only if 
$\mu(\{f_c\})=0$.
 
When $\E[Y] < \E[X]<+\infty$,
we define the following probability measure (on Borel sets $A \subseteq [0,1]$) and its cumulative distribution function
  \begin{equation}\label{eq:cdlshape}
\begin{split}
\pr_\infty(A)&:=
\frac{\mu(A\cap (f_c,1])\E[X]+ \ident_A(f_c) \E[\mu([0,f_c])X-Y]}{\E[X-Y]},\\
 F_\infty(f)&:=
 \begin{cases}
0 & f < f_c,\\
\frac{\E[F(f)X-Y]}{\E[X-Y]} & f \ge f_c.\\
   \end{cases}\\
   \end{split}
  \end{equation}

\begin{defn}\label{def:extinction}
 Let $A \subseteq [0,1]$. We say that 
 \begin{enumerate}[(i)]
  \item there is extinction in $A$ if and only if  $Z_n(A)=0$ infinitely often a.s.;
  \item there is survival in $A$ if and only if for all $n\in\N$ such that $\Prob(Z_n(A)>0)>0$ we have 
  $\Prob(Z_m(A)>0, \forall m\ge n|Z_n(A)>0)>0$. 
 \end{enumerate}
 When $A=\{f\}$ we speak of extinction and survival of the fitness $f$. 
%
\end{defn}
It is a consequence of the following theorem that, when $A \subseteq [0,1]$ is a Borel set, either there is
extinction in $A$ or there is survival. Indeed, if there is no extinction in $A$ then $\pr_\infty(A) >0$, thus
$Z_n(A) \to +\infty$ almost surely. By a standard argument this implies survival.

%
%
%
  
  \begin{teo}[\textbf{Shape Theorem}]\label{th:genshape1}
  $\ $ \\
  \begin{enumerate}
   \item 
 For all sets $A \subseteq [0,1]$ such that $\mu(A \setminus [0,f_c))=0$,
   there is extinction in $A$ and
   $Z_{n}(A)/n \stackrel{n \to +\infty}{\longrightarrow} 0$ uniformly with respect to $A$ 
   almost surely. If $F(f_c)=\E[Y]/\E[X]$ then the same holds for all $A \subseteq [0,1]$ such that $\mu(A \setminus [0,f_c])=0$.
 \item  If, for every $n$, $\{f_{n,i}\}_{i \in \N}$ are i.i.d and $\E[X]=+\infty>\E[Y]$ then  we have 
 that $Z_n/n\stackrel{n \to +\infty}{\longrightarrow}\infty$ and $Z_{n}(A)/Z_{n} \stackrel{n \to +\infty}{\longrightarrow} \mu(A)$ 
 a.s.~(for Borel sets  $A$ such that $\mu(A)>0$).
   \item
   If $\E[X-Y] \in (0, +\infty)$ then $Z_{n}/n \stackrel{n \to +\infty}{\longrightarrow} \E[X-Y]/2$ a.s.~and    
  \begin{equation}\label{eq:shape1}
  \pr \Big ( \frac{Z_{n}(A)}{Z_n} \stackrel{n \to +\infty}{\longrightarrow} \pr_\infty(A), \, \textrm{for every Borel set }A \subseteq [0,1]
  \Big )=1.
   \end{equation}
 Moreover 
 \begin{equation*}
 \sup_{f \in [0,1]} \Big|\frac{Z_n([0,f])}{Z_n}-F_\infty(f)\Big|\to
0, \qquad \textrm{as }n \to +\infty, \ a.s.
\end{equation*}
 \end{enumerate}
  \end{teo}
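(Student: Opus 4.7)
The plan is to reduce the problem to the asymptotic behavior of a family of queuing processes. For each left interval $I=[0,f]$, using that killing proceeds from the bottom, one checks by induction on $n$ that $Z_{2(n+1)}(I)=(Z_{2n}(I)+\widetilde X_{n+1}-Y_{n+1})^+$, where $\widetilde X_n=\sum_{i=1}^{X_n}\I_I(f_{n,i})$. Thus $\{Z_{2n}([0,f])\}_n$ is a queue with i.i.d.~increments of mean $F(f)\E[X]-\E[Y]$, and Proposition~\ref{pro:slln} yields $Z_{2n}([0,f])/n\to (F(f)\E[X]-\E[Y])^+$ a.s. The same argument applied to the companion queue $\{Z_{2n}([0,f_c))\}_n$, whose drift $F(f_c^-)\E[X]-\E[Y]$ is nonpositive by the definition of $f_c$, gives $Z_{2n}([0,f_c))/n\to 0$. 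On a single event of full probability these limits hold simultaneously for every $f$ in a countable dense subset of $[0,1]$, for the points $f_c$ and $f_c^-$, and for each atom of $\mu$.

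For part (3), taking $f=1$ produces $Z_{2n}/n\to \E[X-Y]$; together with $Z_{2n+1}=Z_{2n}+X_{n+1}$ and $X_{n+1}/n\to 0$ a.s.~(since $\E[X]<\infty$), this gives $Z_n/n\to\E[X-Y]/2$. Dividing the pointwise queue limit by $Z_{2n}/n$ yields $F_n(f):=Z_{2n}([0,f])/Z_{2n}\to F_\infty(f)$ on the chosen countable set. A Glivenko--Cantelli-type argument then promotes this to uniform convergence: both $F_n$ and $F_\infty$ are right-continuous and nondecreasing, so partitioning $[0,1]$ into blocks whose endpoints sit near each atom of $F_\infty$ and using monotonicity on each block converts pointwise into $\sup_f|F_n(f)-F_\infty(f)|\to 0$.

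The Borel-set version of \eqref{eq:shape1} is handled by splitting $A=(A\cap[0,f_c))\cup(A\cap\{f_c\})\cup(A\cap(f_c,1])$. The first summand is negligible since $Z_{2n}(A\cap[0,f_c))\le Z_{2n}([0,f_c))=o(n)$, and the atom at $f_c$ contributes $\I_A(f_c)\E[F(f_c)X-Y]$ via the difference $Z_{2n}([0,f_c])/n-Z_{2n}([0,f_c))/n$. For the tail, $Y_{n+1}/n\to 0$ a.s.~while $Z_{2n}([0,f])/n$ stays above some $c_f>0$ when $f>f_c$, so there is a finite random time $T_f$ beyond which all killings remain inside $[0,f]$. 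Hence for $n\ge T_f$ one has $Z_{2n}(A\cap(f,1])=Z_{2T_f}(A\cap(f,1])+\sum_{k=T_f+1}^{n}\widetilde X_k(A\cap(f,1])$, and the SLLN for the i.i.d.~summands gives $Z_{2n}(A\cap(f,1])/n\to\mu(A\cap(f,1])\E[X]$. Letting $f\downarrow f_c$, the residual $Z_{2n}(A\cap(f_c,f])/n$ is bounded by $Z_{2n}([0,f])/n-Z_{2n}([0,f_c])/n\to(F(f)-F(f_c))\E[X]$, which vanishes by right-continuity of $F$ at $f_c$.

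For part (1), each queue $\{Z_{2n}([0,f])\}_n$ with $f<f_c$ has nonpositive drift, so Proposition~\ref{pro:typesofrecurrence} gives both extinction in $A$ and, via $Z_n(A)\le Z_n([0,f_c))$ (or $\le Z_n([0,f_c])$ in the boundary case $F(f_c)=\E[Y]/\E[X]$), the uniform bound $Z_n(A)/n\to 0$. For part (2), $\E[X]=+\infty$ forces $Z_{2n}/n\to\infty$ by a truncation argument applied to the queue, and the i.i.d.~assumption on $\{f_{n,i}\}_i$ together with the SLLN with random normalization yields $Z_n(A)/Z_n\to\mu(A)$ for each Borel $A$ with $\mu(A)>0$. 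The main obstacle throughout is the step above: controlling the stabilization time $T_f$ as $f\downarrow f_c$ with enough uniformity to identify the limit measure on $(f_c,1]$ and to paste it together with the atom contribution at $f_c$ without losing or gaining mass in the residual strip.
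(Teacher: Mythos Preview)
Your outline is essentially correct and close in spirit to the paper's proof, but the route you take for the Borel-set statement in part~(3) differs from the paper's, and the place you flag as ``the main obstacle'' is not where the difficulty actually lies.

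For \eqref{eq:shape1} the paper does \emph{not} use your decomposition at $f_c$ with a stabilization time. Instead it first obtains the limit for every interval from Proposition~\ref{pro:slln}, then passes to the countable family $\cB$ of finite unions of intervals with rational endpoints (countably many null sets), and finally uses inner/outer regularity of $\pr_\infty$ to squeeze an arbitrary Borel $A$ between $B_1,B_2\in\cB$ with $\pr_\infty(B_2\setminus B_1)\le\varepsilon$. Your approach---fix $f>f_c$, wait for the last visit of $Z_{2n}([0,f])$ to $0$, then apply the SLLN to the pure birth process on $A\cap(f,1]$---is exactly Proposition~\ref{pro:slln}(3) and is valid for each \emph{fixed} $A$. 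The step you call the obstacle, letting $f\downarrow f_c$, is in fact harmless: for a sequence $f_k\downarrow f_c$ you have the simultaneous a.s.\ limits, and the residual strip is controlled by $Z_{2n}((f_c,f_k])/n\le Z_{2n}([0,f_k])/n-Z_{2n}([0,f_c])/n\to(F(f_k)-F(f_c))\E[X]\to 0$ by right-continuity of $F$. No uniformity of $T_{f_k}$ is needed.

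The point that \emph{does} require care is the quantifier in \eqref{eq:shape1}: the probability-one statement is required to hold for \emph{every} Borel $A$ simultaneously, while your SLLN for $\sum\widetilde X_k(A\cap(f,1])$ produces a null set depending on $A$. To close this you must either (i) import the paper's regularity argument on top of your construction, or (ii) note that your uniform CDF convergence already gives the limit for all intervals simultaneously, and then apply the same regularity step. Either way some version of the $\cB$-approximation is unavoidable; your stabilization argument alone does not deliver the uniform-in-$A$ conclusion.

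Two smaller remarks. For the uniform convergence of $F_n$ the paper argues slightly differently from a direct Glivenko--Cantelli: it splits $F_\infty$ into its atomic part $H$ and continuous part $F_\infty-H$, uses Scheff\'e for the atomic masses to get $\|H_n-H\|_\infty\to0$, and then the standard Dini/P\'olya argument for the continuous remainder. Your partitioning-at-the-atoms sketch is equivalent in effect. For part~(2), ``SLLN with random normalization'' is the right slogan but the paper makes it precise via Hoeffding's inequality and Borel--Cantelli applied along the stopping times $\tau_n=\inf\{k:N_k\ge n\}$; this is where the i.i.d.\ assumption on $\{f_{n,i}\}_i$ is actually used, and you should spell it out.
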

%
%
  It is worth noting that, as a consequence of Theorem~\ref{th:genshape1}(1), whenever $\E[\mu(I)X-Y] \in [-\infty, 0]$ 
    for some left interval $I$, then $Z_{2n}(I)=0$ infinitely often
  a.s.; nevertheless $Z_{2n}(I)$ has a non-trivial limit in law (see Proposition~\ref{pro:typesofrecurrence}(3) for details).
  This implies that when $\E[X]<+\infty$ and $\E[X] \le \E[Y] \le +\infty$ 
then all fitness go extinct.
  
  The example given in Section~\ref{subsec:counterexample} shows that, if $\{f_{n,i}\}_{i \in \N}$ are just dependent, 
  then the conclusion in Theorem~\ref{th:genshape1}(2) might be false.
  
  
%
%

The following theorem describes the long-term behaviour of a fixed fitness.
Note that all $f>f_c$ belong to case (1), while all $f<f_c$ 
belong to (2). If $f=f_c$, then case (2) applies if
and only if $F(f_c)=\E[Y]/\E[X]$.

\begin{teo}[\textbf{Extinction and survival}]\label{th:gen1}
Let $f \in [0,1]$.
 \begin{enumerate}
  \item If $\E[F(f) X-Y] \in (0,+\infty]$
  then there is survival in $[0,f]$ and
  the fitness $f$ survives. Moreover, $\lim_{n \to \infty}Z_n([0,f])=\infty$ a.s.~and, if, 
  $\mu(\{f\})>0$ then $\lim_{n \to \infty} Z_n(\{f\}) =\infty$
  almost surely.
  \item If $\E[F(f) X-Y]\in [-\infty,0]$
  then there is extinction in $[0,f]$.
\end{enumerate}
\end{teo}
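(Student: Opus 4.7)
The main idea is to recognize, for any left interval $I\subseteq[0,1]$ with $\mu(I)>0$, the even-time sequence $\{Z_{2n}(I)\}_{n\ge 0}$ as a Lindley queue with i.i.d.\ increments $\widetilde X_n-Y_n$ of mean $\mu(I)\E[X]-\E[Y]$. Indeed, since a removal takes deaths from the bottom of the fitness order and $I$ is a left interval, exactly $\min(Y_{n+1},Z_{2n+1}(I))$ deaths are drawn from $I$, giving $Z_{2(n+1)}(I)=\max\bigl(0,\,Z_{2n}(I)+\widetilde X_{n+1}-Y_{n+1}\bigr)$. Applying this with $I=[0,f]$, part~(2) follows immediately: nonpositive drift $\E[F(f)X-Y]\le 0$ makes the queue recurrent at $0$ (as in \cite[Ch.~VI.9]{cf:Feller2}), hence $Z_{2n}([0,f])=0$ infinitely often a.s., i.e.\ extinction in $[0,f]$.

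For part~(1), the $[0,f]$-queue now has strictly positive (possibly infinite) drift, so Proposition~\ref{pro:slln} yields $Z_{2n}([0,f])/n\to\E[F(f)X-Y]>0$ a.s.; in particular $Z_{2n}([0,f])\to\infty$, and the inequality $Z_{2n+1}([0,f])\ge Z_{2n}([0,f])$ upgrades this to $Z_n([0,f])\to\infty$ a.s. Survival in $[0,f]$ then follows by the ``standard argument'' already invoked after the Shape Theorem.

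For the sharper claim $Z_n(\{f\})\to\infty$ when $\mu(\{f\})>0$ I would write $Z_{2n}(\{f\})=Z_{2n}([0,f])-Z_{2n}([0,f))$ and split on the drift of the auxiliary $[0,f)$-queue. If $\E[F(f^-)X-Y]\le 0$, Proposition~\ref{pro:slln} gives $Z_{2n}([0,f))/n\to 0$ a.s., and subtracting from the previous limit yields $Z_{2n}(\{f\})/n\to\E[F(f)X-Y]>0$ a.s. If instead $\E[F(f^-)X-Y]>0$, the fluctuation identity $Z_{2n}([0,f))=S_n-\min_{0\le k\le n}S_k$ (with $S_k$ the cumulative sum of the $[0,f)$-increments) combined with $S_n/n\to\E[F(f^-)X-Y]>0$ shows that $Z_{2n}([0,f))=0$ only finitely often a.s.; since a removal at step $n+1$ can delete species of fitness $f$ only when the $[0,f)$-supply is fully exhausted (equivalently, when $Z_{2n+2}([0,f))=0$), after a finite random time $Z_n(\{f\})$ is non-decreasing and gains at each odd step an i.i.d.\ summand of mean $\mu(\{f\})\E[X]>0$, so $Z_n(\{f\})\to\infty$ by the ordinary SLLN. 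Survival at $\{f\}$ is again the standard consequence.

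The main obstacle I foresee is the second subcase above: verifying cleanly that overflow of deaths into $\{f\}$ occurs only when $Z_{2n+2}([0,f))=0$, and then invoking the positive-drift fact that this event happens only finitely many times. A separate sanity check handles $\E[X]=+\infty$: the standing hypothesis forces $\E[Y]<\infty$, so $F(f^-)>0$ automatically places us in the positive-drift subcase, while $F(f^-)=0$ makes $[0,f)$ a $\mu$-null set and reduces $Z_n(\{f\})$ to $Z_n([0,f])$.
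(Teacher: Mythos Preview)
Your argument is correct. Part~(2) and the first half of part~(1) (transience/SLLN for the $[0,f]$-queue giving $Z_n([0,f])\to\infty$, hence survival) match the paper's route essentially verbatim.

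Where you genuinely diverge is in proving $Z_n(\{f\})\to\infty$ when $\mu(\{f\})>0$. You split on the sign of the drift of the auxiliary $[0,f)$-queue: in the nonpositive-drift subcase you subtract two SLLN limits to get $Z_{2n}(\{f\})/n\to\E[F(f)X-Y]>0$; in the positive-drift subcase you observe that overflow into $\{f\}$ requires $Z_{2(n+1)}([0,f))=0$, which by transience happens only finitely often, so $Z_n(\{f\})$ is eventually a pure accumulation process. Both subcases are clean, and your worry about the ``overflow only when $Z_{2(n+1)}([0,f))=0$'' step is unfounded: it is immediate from the removal rule. The paper instead gives a single unified argument that avoids the case split: once $Z_{2n}([0,f])\ge M$ for all $n\ge n(M)$, any decrease in $Z_{2n}(\{f\})$ forces $Z_{2(n+1)}(\{f\})=Z_{2(n+1)}([0,f])\ge M$; one then checks that either $Z_{2n}([0,f))$ eventually stays positive (so species at $f$ accumulate) or it hits $0$ after $n(M)$ (forcing $Z_{2n}(\{f\})\ge M$ from then on). Your approach is more quantitative (it actually yields the growth rate of $Z_n(\{f\})$ in the first subcase), while the paper's is slightly slicker in that it does not need to analyze the $[0,f)$-queue separately.
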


Denote by $K_n(A)$ the number of species killed in $A$ up to time $n$ and by $\tau_n(A)$ the total number of epochs that there are
no species in $A$ up to time $n$.
From Theorem~\ref{th:genshape1}, if $\E[X-Y]>0$ then, as $n \to +\infty$,
\begin{equation}\label{eq:killing}
  \frac{K_n(A)}{n} = \frac{\sum_{i=1}^{\lfloor n/2 \rfloor} \widetilde X_i -Z_{n}(A)}{n} \sim
 \frac12 \mu(A \cap [0,f_c]) \E[X]-\frac12 \ident_A(f_c) \E[\mu([0,f_c])X-Y] \qquad a.s.
\end{equation}
where $\widetilde X_n$ is the number of species created in the Borel set $A$ (see equation~\eqref{eq:XYtilde}).

\begin{cor}\label{rem:dynamics}
 If $\E[X-Y] >0$ then
 \begin{enumerate}
  \item  $\displaystyle \lim_{n \to +\infty} K_n((f_c,1])/n =0$ a.s.;
\item  $K_n([f_c,1]) \to +\infty$ a.s.;
\item  If $F(f_c)> \E[Y]/\E[X]$ then $\displaystyle \sup_{n \in \N} K_n((f_c,1]) <+\infty$ a.s., otherwise $\displaystyle \lim_{n \to +\infty} K_n((f_c,1]) = +\infty$ a.s.;
 \item  If $F(f_c^-)< \E[Y]/\E[X]$ then $\displaystyle \lim_{n \to +\infty} K_n([f_c,1])/n >0$ a.s.,
 otherwise $\displaystyle \lim_{n \to +\infty} K_n([f_c,1])/n =0$ a.s.;
 \item  If $f>f_c$ then $\displaystyle \sup_{n \in \N} K_n([f,1]) <+\infty$ almost surely.
\end{enumerate}
 \end{cor}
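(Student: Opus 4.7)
The plan is to derive each assertion from the mass-balance identity~\eqref{eq:killing}, the Shape Theorem, and the elementary fact that $Y_k/k\to 0$ almost surely. This last property is available because $\E[X-Y]>0$ forces $\E[Y]<\infty$, and then $\sum_k\Prob(Y_k>\eps k)\le\E[Y]/\eps<\infty$ together with Borel--Cantelli gives $Y_k/k\to 0$ a.s.

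Claims~(1) and~(4) are direct consequences of~\eqref{eq:killing}: the choice $A=(f_c,1]$ makes the right-hand side vanish, while $A=[f_c,1]$ collapses it to $\tfrac12(\E[Y]-F(f_c^-)\E[X])$, which produces the stated dichotomy. For~(5), fix $f>f_c$ and observe that
$\pr_\infty([0,f))=(F(f^-)\E[X]-\E[Y])/\E[X-Y]>0$ since $F(f^-)>\E[Y]/\E[X]$; by Theorem~\ref{th:genshape1}(3), $Z_{2k+1}([0,f))$ grows linearly in $k$, so combining with $Y_{k+1}/k\to 0$ yields $Y_{k+1}\le Z_{2k+1}([0,f))$ for all sufficiently large $k$ a.s. Consequently no removals occur in $[f,1]$ from some time on and $\sup_n K_n([f,1])<\infty$. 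The first case of~(3), when $F(f_c)>\E[Y]/\E[X]$, is proved identically, with $f_c$ in place of $f$ and using $\pr_\infty([0,f_c])>0$.

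Claim~(2) and the second case of~(3) require a stopping-time argument. By Theorem~\ref{th:genshape1}(1) there is extinction in $I:=[0,f_c)$ (used for~(2)), and in $I:=[0,f_c]$ whenever $F(f_c)=\E[Y]/\E[X]$ (used for the second case of~(3)); in either situation the event $E_k:=\{Z_{2k}(I)=0\}$ is $\cF_{2k}$-measurable and occurs for infinitely many $k$ a.s. Since the fresh triple $(X_{k+1},Y_{k+1},f_{k+1})$ is independent of $\cF_{2k}$, the conditional Borel--Cantelli lemma applied to the event $E_k\cap\{Y_{k+1}>N_{k+1}\}$, where $N_{k+1}$ denotes the count of the $X_{k+1}$ newborns whose fitness lies in $I$, produces infinitely many $k$ at which a removal reaches the complement of $I$ in $[0,1]$. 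This yields $K_n([f_c,1])\to\infty$ in~(2) and $K_n((f_c,1])\to\infty$ in the second case of~(3).

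The main obstacle is the conditional Borel--Cantelli step: one must choose the filtration so that the input triple at step $k+1$ is genuinely independent of the past, and verify the non-degeneracy condition $\Prob(Y>N)>0$. This positivity follows from $\Prob(Y\ge 1)>0$, the bound $F(f_c^-)<1$ (which is forced by $\E[X-Y]>0$ and $f_c<\infty$ via the definition of $f_c$), and the independence of $f_n$ from $(X_n,Y_n)$; indeed $\Prob(Y>N)\ge\E[\I\{Y\ge 1\}(1-F(f_c^-))^X]>0$. Every other step in the corollary amounts to a short computation with~\eqref{eq:killing}, Theorem~\ref{th:genshape1}, or Theorem~\ref{th:gen1}.
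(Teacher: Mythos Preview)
Your argument is essentially correct and follows the same outline as the paper's proof: items~(1) and~(4) come straight from~\eqref{eq:killing}, items~(5) and the first half of~(3) from eventual positivity of $Z_n$ on a suitable left interval, and item~(2) together with the second half of~(3) from the recurrence of that left interval. There are, however, two points worth noting.

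\textbf{On (5) and the first case of (3).} The paper's route is shorter than yours. Rather than invoking linear growth of $Z_{2k+1}([0,f))$ and the Borel--Cantelli fact $Y_{k+1}/k\to 0$, the paper simply observes that if any species with fitness in $I^c$ is removed at step $k+1$ then necessarily $Z_{2k+2}(I)=0$; hence, once $Z_{2k}(I)>0$ for all large $k$ (which is exactly transience of the queuing process for $I$, i.e.~$\E[\mu(I)X-Y]>0$), no further killing occurs in $I^c$. This avoids any appeal to Theorem~\ref{th:genshape1}(3) and in particular covers the case $\E[X]=+\infty$ without modification; your citation of Theorem~\ref{th:genshape1}(3) tacitly assumes $\E[X-Y]<+\infty$.

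\textbf{On (2) and the second case of (3).} Here the paper is terser than you: it simply asserts that ``$Z_n(I)=0$ i.o.\ and $\E[X-Y]>0$ imply $K_n(I^c)\to+\infty$'' and leaves the verification to the reader. Your conditional Borel--Cantelli argument is a legitimate and more explicit way to justify this implication. One caveat: your lower bound $\Prob(Y>N)\ge\E[\I\{Y\ge 1\}(1-F(f_c^-))^X]$ silently assumes that the fitnesses $f_{n,1},f_{n,2},\ldots$ are independent, so that $\{N=0\}$ conditionally on $X$ has probability $(1-\mu(I))^X$. The paper allows dependent fitnesses within a generation, and under that generality the bound can fail (indeed one can build degenerate examples with $\widetilde X=Y$ a.s.). This is, however, a borderline issue with the statement itself rather than with your method; under the i.i.d.\ hypothesis on the $f_{n,i}$ your argument is complete.
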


Here is a more explicit description.
First of all, by (5) a.s.~there are no more species killed in $[f,1]$ eventually as $n \to +\infty$ but by (2)
the number of species killed in $[f_c,1]$ diverges almost surely.

If $\mu(\{f_c\})=0$ then
$F(f_c)=\E[Y]/\E[X]=F(f_c^-)$, so that by (4) $K_{n}([f_c,1])/n$  goes to
zero  a.s.~and $\tau_n([0,f_c])/n \to 0$ almost surely as $n \to +\infty$
(see Proposition~\ref{pro:typesofrecurrence}(2)).

If $\mu(\{f_c\})>0$ then we have the following possibilities:
\begin{itemize}
 \item $F(f_c)>\E[Y]/\E[X]=F(f_c^-)$ then by (4) $K_{n}([f_c,1])/n$ goes to
zero  almost surely. Moreover, 
by Theorem~\ref{th:gen1}(1) we have $Z_n(\{f_c\}) \to +\infty$ almost surely as $n \to +\infty$,
 implying that the species killed in $[f_c, 1]$ eventually will have fitness $f_c$
 almost surely. Even though the number of species of fitness $f_c$ which are killed
 diverges, by equation~\eqref{eq:shape} the  fraction of species alive with fitness
 $f_c$ converges to $(F(f_c)-\E[Y]/\E[X]) \cdot \E[X]/2>0$. 
  Also,
 $\tau_n([0,f_c))/n$ goes to $0$ (see again Proposition~\ref{pro:typesofrecurrence}(2)).
 \item $F(f_c)>\E[Y]/\E[X]>F(f_c^-)$ then, just as before, a.s.~the species killed in $[f_c, 1]$ eventually will have fitness $f_c$
 and the  fraction of species alive with fitness
 $f_c$ converges to the same positive limit.
 This time $K_n(\{f_c\})/n$ has a positive limit: $-\E[F(f_c^-)X-Y]/2$ and
 $\tau_n([0,f_c))/n$ converges to a positive limit almost surely as $n \to +\infty$
(see Proposition~\ref{pro:typesofrecurrence}(3)). 
  
 \item $F(f_c)=\E[Y]/\E[X]>F(f_c^-)$ then, by Theorem~\ref{th:genshape1}(2),
 every species with fitness $f_c$
 is eventually  killed a.s.~and  $K_n([f_c,1])/n$ converges to $-\E[F(f_c^-)X -Y]/2>0$.
  But $K_n((f_c,1])/n$ 
tends to 0, a.s., thus outside a negligible proportion, the killed
species all have fitness
 $f_c$, whence $K_n(\{f_c\})/n$ has the same positive limit as before.  
Finally,
$\tau_n([0,f_c])/n \to 0$ almost surely as $n \to +\infty$
(see Proposition~\ref{pro:typesofrecurrence}(2)).
\end{itemize}

\subsection{Comparison with previous works}
\label{subsec:previous}
Our process extends those appeared in \cite{cf:ben-ari2,cf:Schi,cf:Volk}. 
Aside from our general choice for the fitness law, the birth-and-death mechanism
that we study is more general than those adopted in these papers.


One way to see the original GMS (see \cite{cf:Schi}) as 
a particular case of our process is by
observing that the random sequences of consecutive births $X_k$ and consecutive
deaths $Y_k$
have right-shifted Geometric distribution with parameter $1-p$ and $p$ respectively.

In general, consider a process $\{\overline{Z}_n\}_{n
\in \N}$ where at each step either a species is created (along with
its fitness) or the least-fit species, if any, is removed.  Denote
by $X_1>0$ the length of the first stretch of ``creations'',
followed by a stretch of ``annihilations'' of length $Y_1>0$, then
another stretch of ``creations'' of length $X_2$ followed by a
stretch of $Y_2$ ``annihilations'' and so on. Suppose that
$\{X_n\}_{n \in \N}$ and $\{Y_n\}_{n \in \N}$ are two
i.i.d.~sequences. It is clear that there is a connection between our process
and this one, namely for every set $A$,
$Z_{n}(A)=\overline{Z}_{N_n}(A)$ where $N_n=\sum_{i=1}^{{\lfloor}(n+1)/2{\rfloor}}X_i
+
 \sum_{i=1}^{{\lfloor}n/2{\rfloor}}Y_i$.

In particular if $n$ is even and $k\in (N_n,
N_{n+1})$, then  $\overline{Z}_{k}(A)$ is nondecreasing, while if
$k\in (N_{n+1}, N_{n+2})$, then  $\overline{Z}_{k}(A)$ is
nonincreasing.  Proposition~\ref{pro:slln} shows that for
every left interval $I$
$$ \frac{Z_{n}(I)}{n}=\frac{\overline{Z}_{N_n}(I)}{n}\to \frac{\E[\mu(I)X-Y]}{ 2},\quad   {\rm a.s.}$$
When $\E[X+Y]<\infty$, then the monotonicity of
$\overline{Z}_{k}$ between $N_{n}$ and $N_{n+1}$, implies
\[
\frac{\overline{Z}_n(I)}{n}\to \frac{\E[\mu(I)X-Y]}{2}\frac{2}{\E[X+Y]}=
\frac{\E[\mu(I)X-Y]}{\E[X+Y]},\quad \rm{a.s.}
\]
Therefore, the long-term behaviour of $\{\overline{Z}_n(I)\}_{n \in
\N}$ can be derived simply by studying $\{Z_n(I)\}_{n \in \N}$.

Our work can also be considered as a generalization of \cite{cf:ben-ari2} and \cite{cf:Volk}
whose models are essentially equivalent.
Indeed,
in \cite{cf:ben-ari2}, a single family of $\Z$-valued
variables $\{U_n\}_{n\in\N}$ is considered. In this process,
$U_n>0$ means that $U_n$ species are created, while $U_n<0$ means
that $-U_n$ species are killed.  In this case the laws of length of
a ``creation'' stretch $X_i$ and ``annihilation'' stretch $Y_i$ are
necessarily geometric random convolutions of the law of $U_n$
conditioned on $\{U_n>0\}$ and $\{U_n<0\}$ respectively. Moreover,
the sum of the parameters of these geometric convolutions must be
$1-\pr(U_1=0)$. {Therefore,} a model constructed
from the variables $\{U_n\}_{n\in\N}$ can be considered as a
particular case of our model: take for instance $X_n:=U_n \ident_{\{U_n >0\}}$,
$Y_n:=-U_n \ident_{\{U_n <0\}}$ and consider the process $\{Z_{2n}(A)\}_{n \in \N}$.
In Section~\ref{subsec:markov} we consider a particular case of
our process which cannot be obtained with a single family of
variables describing simultaneously creations and annihilations.

Observe that in Theorem~\ref{th:genshape1} we used $Z_n$ as a normalizing factor for $Z_n(A)$ 
but there are two other
natural choices: $n$ (to compare with \cite{cf:ben-ari2, cf:Volk}) and $N_n$ (to compare with \cite{cf:ben-ari, cf:Schi}).
\\
If $\E[X_i+Y_i]<+\infty$ then, by the Strong Law of Large Numbers (SLLN), $N_n \sim n \E[X+Y]/2$ almost surely as $n \to +\infty$.
If, in addition, $\E[X-Y] \in (0, +\infty)$ then by Proposition~\ref{pro:slln}  we have
\[
 Z_n\sim n \E[X-Y]/2 \sim N_n \frac{\E[X-Y]}{\E[X+Y]} \qquad a.s.
\]
as $n \to +\infty$.
Hence Theorem~\ref{th:genshape1}(3) can be equivalently written in terms of the timescale $n$ or $N_n$ 
(in this last case we obtain a generalization of Proposition~\ref{pro:slln}(1) to Borel sets).
 \\
 If $\E[X]=+\infty>\E[Y]$ then  $Z_n\sim N_n$ almost surely 
 as $n \to +\infty$. Indeed one can use the same kind of arguments used in the proof of 
 Theorem~\ref{th:genshape1}(2), to prove that $Z_n$ and $\sum_{i=1}^{{\lfloor}(n+1)/2{\rfloor}}X_i$ are 
 asymptotic and the remaining terms are negligible.
Roughly speaking, changing timescale turns out to be just a linear rescaling.

We note that for the GMS model and its generalizations, with $\mu\sim \mathcal{U}([0,1])$ (where $\mathcal{U}(I)$ is
the uniform distribution on $I$), the fraction
of surviving species in any $I\subseteq[f_c,1]$ is proportional to $\mu(I)$. This is still true in our case
when $I\subseteq(f_c,1]$, but it does not hold for instance if $I=[f_c,b]$ and
$F(f_c^-)< \E[Y]/\E[X]$. Moreover if $\mu\sim \mathcal{U}([0,1])$ then $K_n([f_c,1])/n\to 0$ 
(the exact rate of convergence for the GMS is studied in \cite{cf:ben-ari}), while again
this needs not to be true if $f_c$ is an atom for $\mu$.

\section{Examples and counterexamples}\label{sec:examples}

\subsection{The original GMS model}\label{subsec:original}

The original GMS process can be seen as the particular case where $X$ has a geometric law $\mathcal{G}(1-p)$ 
while $Y$ has a geometric law $\mathcal{G}(p)$.
Thus 
$\E[X_i]=1/(1-p)$, $\E[Y_i]=1/p$ and $\E[Y_i]/\E[X_i]=(1-p)/p$ is the relevant term for computing $f_c$ according to 
equation~\eqref{eq:criticalf}.

In this example we choose
$\mu:=\alpha \delta_{1/2} + (1-\alpha)\nu$ (where $\nu \sim  \mathcal{U}([0,1])$); the case $\alpha=0$ is discussed in \cite{cf:Schi}. 
Roughly speaking, every time a new species is born
we toss a (possibly biased) coin: with probability $\alpha$ we assign to the new species a fitness $1/2$ and with probability $1-\alpha$
the fitness is drawn uniformly and independently in $[0,1]$. 
Clearly
\begin{equation}\label{eq:Fexample}
 F(f)=
 \begin{cases}
(1-\alpha)f & f \in [0,1/2) \\
(1-\alpha)f+\alpha & f \in [1/2,1].
 \end{cases}
\end{equation}
For every Borel set $A \subseteq [0,1]$, $\E[\mu(A)X-Y]=\mu(A)/(1-p)-1/p$ and, according to equation~\eqref{eq:shape1},
\[
 \lim_{n \to +\infty} \frac{Z_{n}(A)}{Z_n}= \mu(A \cap (f_c,1])\frac p{2p-1} +\ident_A(f_c) \frac{\mu([0,f_c])p-(1-p)}{2p-1},
\]
where $f_c$ is given by equation~\eqref{eq:criticalf} and it is the unique solution in $[0,1]$ of $F(f_c) \ge \E[Y]/\E[X]=(1-p)/p  \ge F(f_c^-)$.
More interesting  is the cumulative limit distribution (see equation~\eqref{eq:cdlshape})
  \[
   \lim_{n \to +\infty} \frac{Z_n([0,f])}{Z_n}=
   \begin{cases}
0 & f \in [0,f_c)\\
F(f) \frac{p}{2p-1}-\frac{1-p}{2p-1} & f \in [f_c,1].\\
\end{cases}
   \]

To avoid useless complications, 
we discuss just the ``fair coin'' case $\alpha=1/2$. 
In this case we have
\[
 f_c=
 \begin{cases}
  2(1-p)/p & p \in (4/5,1]\\
  1/2 & p \in [4/7,4/5]\\
  (2-3p)/p & p \in (1/2, 4/7).
 \end{cases}
\]
There are five typical situations that we can explore and they are represented by the following table where we choose
$(1-p)/p=7/8, \, 3/4, \, 1/2,\, 1/4, \, 1/8$.
\medskip

\begin{center}
\resizebox{\textwidth}{!}{
\begin{tabular}{|c|c|c|c|c|c|c|c|}
 \hline 
 $p$ & $f_c$ & $F(f_c)$ & $F(f_c^-)$ & ${\E[F(f_c)X-Y]}$ & ${\E[F(f_c^-)X-Y]}$ & \multicolumn{2}{c|}{$\lim_{n \to +\infty} Z_n([0,f])/Z_n$}\\
  & & & & & & \multicolumn{2}{c|}{\qquad c.d.f.  \qquad  \qquad \qquad \qquad \qquad Law } \\
 \hline
 $8/15$ & $3/4$ & $7/8$ & $7/8$ & $0$ & $0$ & \phantom{\Big (} $(4f-3)\ident_{[3/4,1]}(f)$ \phantom{\Big (}& $\mathcal{U}([3/4,1])$\\
 \hline
 $4/7$ & $1/2$ & $3/4$ & $1/4$ & $0$ & $-7/6$ & \phantom{\Big (}$ (2f-1)\ident_{[1/2,1]}(f) $ \phantom{\Big (} & $\mathcal{U}([1/2,1])$ \\
 \hline
 $2/3$ & $1/2$ & $3/4$ & $1/4$ & $3/4$ & $-3/4$ & \phantom{\Big (} $f \ident_{[1/2,1]}(f) $ \phantom{\Big (} & $\frac12\delta_{1/2}+\frac12 \mathcal{U}([1/2,1])$ \\
 \hline 
 $4/5$ & $1/2$ & $3/4$ & $1/4$ & $5/2$ & $0$ & \phantom{\Big (} $\frac{2f+1}{3}\ident_{[1/2,1]} $\phantom{\Big (} & $\frac23 \delta_{1/2}+\frac13\mathcal{U}([1/2,1])$ \\
 \hline
 $8/9$ & $1/4$ & $1/8$ & $1/8$ & $0$ & $0$ & \phantom{\Big (} $\frac{4f-1}{7}\ident_{[1/4,1]}(f) + \frac47\ident_{[1/2,1]}(f)$\phantom{\Big (}  & $\frac47 \delta_{1/2} +\frac37 \mathcal{U}([1/4,1]) $\\
 \hline
\end{tabular}
}
\end{center}
\medskip

\subsection{The Markov case}\label{subsec:markov}

Let the birth-death process be now a Markov chain with transition matrix
\begin{equation}
\left(
    \begin{array}{cc}
      p & 1-p \\
      1-q & q \\
    \end{array}
  \right)\end{equation}
starting from a birth.
Thus the probability of a birth after the birth $P_{++}=p$, the
probability of death after the birth is $P_{+-}=1-p$ and so on. 
This can be seen as a particular case of our process 
where $X$ has a geometric law $\mathcal{G}(1-p)$ 
while $Y$ has a geometric law $\mathcal{G}(1-q)$.
We assume that $p>q$; clearly $\E[Y]/\E[X]=(1-p)/(1-q)$. 

As before we choose
$\mu:=\alpha \delta_{1/2} + (1-\alpha)\nu$ (where $\nu \sim  \mathcal{U}([0,1])$); thus the cumulative distribution function is 
still given by equation~\eqref{eq:Fexample}.

Now $\E[\mu(I)X-Y]=\mu(I)/(1-p)-1/(1-q)$ and, according to equation~\eqref{eq:cdlshape},
  \[
   \lim_{n \to +\infty} \frac{Z_n([0,f])}{Z_n}=
   \begin{cases}
0 & f \in [0,f_c)\\
F(f) \frac{1-q}{p-q}-\frac{1-p}{p-q} & f \in [f_c,1]\\
\end{cases}
   \]
where $f_c$ is the unique solution in $[0,1]$ of $F(f_c) \ge (1-p)/(1-q)  \ge F(f_c^-)$.

As before, 
we discuss just the ``fair coin'' case $\alpha=1/2$. 
In this case we have
\[
 f_c=
 \begin{cases}
  2(1-p)/(1-q) & (q+3)/4 <p \le 1\\
  1/2 &  (1+3q)/4 \le p \le (3+q)/4\\
 (1+q-2p)/(1-q) & q < p < (1+3q)/4.
 \end{cases}
\]
We retrieve the same typical cases as before by choosing
$(1-p)/(1-q)=7/8, \, 3/4, \, 1/2,\, 1/4, \, 1/8$.

\begin{center}
\resizebox{\textwidth}{!}{
\begin{tabular}{|c|c|c|c|c|c|c|c|}
 \hline 
 $(p,q)$ & $f_c$ & $F(f_c)$ & $F(f_c^-)$ & ${\E[F(f_c)X-Y]}$ & ${\E[F(f_c^-)X-Y]}$ & \multicolumn{2}{c|}{$\lim_{n \to +\infty} Z_n([0,f])/Z_n$}\\
  & & & & & & \multicolumn{2}{c|}{\qquad c.d.f.  \qquad  \qquad \qquad \qquad \qquad Law } \\
 \hline
 $(2/9,1/9)$ & $3/4$ & $7/8$ & $7/8$ & $0$ & $0$ & \phantom{\Big (} $(4f-3)\ident_{[3/4,1]}(f)$ \phantom{\Big (}& $\mathcal{U}([3/4,1])$\\
 \hline
 $(2/5,1/5)$ & $1/2$ & $3/4$ & $1/4$ & $0$ & $-5/6$ & \phantom{\Big (}$ (2f-1)\ident_{[1/2,1]}(f) $ \phantom{\Big (} & $\mathcal{U}([1/2,1])$ \\
 \hline
 $(3/4,1/2)$ & $1/2$ & $3/4$ & $1/4$ & $1$ & $-1$ & \phantom{\Big (} $f \ident_{[1/2,1]}(f) $ \phantom{\Big (} & $\frac12\delta_{1/2}+\frac12 \mathcal{U}([1/2,1])$ \\
 \hline 
 $(5/6,1/3)$ & $1/2$ & $3/4$ & $1/4$ & $3$ & $0$ & \phantom{\Big (} $\frac{2f+1}{3}\ident_{[1/2,1]} $\phantom{\Big (} & $\frac23 \delta_{1/2}+\frac13\mathcal{U}([1/2,1])$ \\
 \hline
 $(9/10,1/5)$ & $1/4$ & $1/8$ & $1/8$ & $0$ & $0$ & \phantom{\Big (} $\frac{4f-1}{7}\ident_{[1/4,1]}(f) + \frac47\ident_{[1/2,1]}(f)$\phantom{\Big (}  & $\frac47 \delta_{1/2} +\frac37 \mathcal{U}([1/4,1]) $\\
 \hline
\end{tabular}
}
\end{center}
\medskip

 \subsection{The Branching Process case}
\label{subsec:warm}

In this example we consider the case where $\pr(Y_n=1)=1$ while $X_k$ has a generic discrete distribution on $\N$. In order to avoid a trivial situation
we assume that $\pr(X_k=1)<1$.
In the following, we make use of the generating function of the variables $\{X_n\}_{n \in \N}$, that is, $\Phi(z):= \sum_{n =0}^\infty \pr(X_0=n) z^n$
for all $z \in [0,1]$.
Similarly, the generating function of the random number of species
whose fitness belongs to $[0,f]$ (resp.~$[0,f)$), that is $\widetilde X_n$, is $\Psi_f(z)=\Phi(zF(f)+1-F(f))$ (resp.~$\Psi_{f^-}(z)=\Phi(zF(f^-)+1-F(f^-))$; 
see Lemma~\ref{lem:genfunct}(3) for details.
In this case, clearly, 
\[
 \begin{split}
f_c&:= \inf\{f \in \mathbb{R} \colon F(f) \Phi^\prime(1) > 1\} \\
 \end{split}
\]

The peculiarity of this case is the fact that the process can studied by means of a branching process and the
probability of survival of a fitness can be computed in terms of the probability of survival of the branching process. 

\begin{pro}\label{pro:BPcase}
Let $f \in [0,1]$ and consider the process $\{Z_{n}([0,f])\}_{n \in \N}$.
Denote by $\bar q_f$ the smallest fixed point in $[0,1]$ of the generating function $\Psi_f=\Phi(zF(f)+1-F(f))$.
Then
\begin{equation}\label{eq:BP1}
 \pr(\exists k \ge n \colon Z_{j+2k}([0,f])=0 | Z_{2n+1}([0,f])=i)=
 \begin{cases}
\bar q_f^i  & \text{if }j=1\\
\bar q_f^{i-1} & \text{if }j=2\\
 \end{cases}
\end{equation}
for $i \ge 1$; moreover
$\bar q_f =1$ if and only if $F(f) \le 1/\Phi^\prime(1)$.
In particular, if $f < f_c$, then there is extinction in $[0,f]$
and if $f > f_c$, then there is survival in $[0,f]$. If $f=f_c$,
then there is  extinction in $[0,f_c]$ if and only if $F(f_c) =
1/\Phi^\prime(1)$.

The same holds for the process $\{Z_{n}([0,f))\}_{n \in \N}$ by using $[0,f)$ and $\Psi_{f^-}=\Phi(zF(f^-)+1-F(f^-))$ instead
of $[0,f]$ and $\Psi_f$ respectively. In particular there is extinction in $[0, f_c)$
\end{pro}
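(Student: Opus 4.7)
The plan is to identify the subprocess $\{Z_n([0,f])\}_{n \in \N}$ with a Galton--Watson branching process by exploiting the fact that exactly one species dies at each even time. The key observation is that whenever $Z_{2k+1}([0,f]) \ge 1$, the least-fit species alive has fitness in $[0,f]$, so the death at time $2k+2$ removes one such species; otherwise $Z_{2k+2}([0,f]) = 0$. Writing $W_k := Z_{2n+2k+1}([0,f])$ and denoting by $\widetilde X_{k+1}$ the number of species born into $[0,f]$ at the $(k+1)$st birth event, one deduces the one-step recursion
\[ W_{k+1} = (W_k - 1)^+ + \widetilde X_{k+1}. \]
By the independence assumption together with Lemma~\ref{lem:genfunct}(3), $\{\widetilde X_k\}_{k \in \N}$ is i.i.d.~with generating function $\Psi_f$, so $W$ is a Markov chain on $\N$ whose dynamics are driven by an i.i.d.~sequence independent of the rest of the process.

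The next step is to read the hitting probabilities of $\{0\}$ and $\{1\}$ for $W$ off the branching process structure. As long as $W_k \ge 1$, the recursion reduces to $W_k = i + S_k$ where $S_k := \sum_{j=1}^k(\widetilde X_j - 1)$, and $W$ is downward-skip-free. Hence $W$ ever reaches $0$ iff $S$ ever reaches $-i$, and the classical duality between Galton--Watson extinction and one-sided ruin (conditioning on $\widetilde X_1$ and using the strong Markov property identifies $\pr(\min_k S_k \le -1)$ as the smallest fixed point of $\Psi_f$, then extends multiplicatively to $-i$) yields the $j=1$ formula, $\pr(\inf_k S_k \le -i) = \bar q_f^i$. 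For the $j=2$ formula, observe that $Z_{2k+2}([0,f]) = (Z_{2k+1}([0,f]) - 1)^+$ vanishes exactly when $W_{k-n} \in \{0,1\}$; since $W$ starts at $i \ge 1$ and decreases in unit steps, any visit to $\{0,1\}$ is preceded (or realized) by a visit to $\{1\}$, and the probability of reaching $1$ from $i$ equals $\bar q_f^{i-1}$ by translation.

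Standard Galton--Watson theory now gives $\bar q_f = 1$ iff $\Psi_f'(1) = F(f)\Phi'(1) \le 1$, that is, $F(f) \le 1/\Phi'(1) = \E[Y]/\E[X]$. Extinction in $[0,f]$ reduces, via the $j=2$ formula with $i = 1$, to $\bar q_f = 1$; this occurs for every $f < f_c$, and at $f = f_c$ precisely when $F(f_c) = 1/\Phi'(1)$. Conversely, for $f > f_c$ (or $f = f_c$ with $F(f_c) > 1/\Phi'(1)$) we have $\bar q_f < 1$; conditioning on $Z_{2n+1}([0,f]) = i \ge 2$, which happens with positive probability for $n$ large enough since $\pr(\widetilde X_1 \ge 2) > 0$ under the survival regime, the probability of never entering $\{0,1\}$ afterwards is $1 - \bar q_f^{i-1} > 0$, and survival in $[0,f]$ follows. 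The corresponding statements for $[0,f)$ with $\Psi_{f^-}$ are obtained by replacing $F(f)$ with $F(f^-)$ throughout; since $F(f_c^-) \le \E[Y]/\E[X] = 1/\Phi'(1)$ by the definition of $f_c$, we get $\bar q_{f_c^-} = 1$ and hence extinction in $[0,f_c)$.

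The main obstacle is the first step, namely verifying that the dynamics of the marginal count $W_k$ genuinely decouple from the total population and from the history of species with fitness outside $[0,f]$, so that the branching process embedding is legitimate. Once this Markovianity is in place, the remainder of the proof is a routine application of classical facts on one-sided exit probabilities for downward-skip-free random walks and fixed points of probability generating functions.
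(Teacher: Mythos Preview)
Your approach is correct and essentially identical to the paper's: both identify the odd-time process $\{Z_{2k+1}([0,f])\}$ with a Galton--Watson branching process with offspring generating function $\Psi_f$, obtain $\bar q_f^{\,i}$ as the extinction probability from $i$ individuals (you spell this out via the skip-free random-walk/ruin correspondence, the paper simply invokes ``standard results in Branching Process theory''), and then derive the $j=2$ formula from the downward-skip-free property exactly as the paper does. One minor slip: the phrase ``via the $j=2$ formula with $i=1$'' in your extinction paragraph is not what you mean (that case gives $\bar q_f^{\,0}=1$ unconditionally); extinction is rather equivalent to $\bar q_f^{\,i-1}=1$ for \emph{all} $i\ge 1$, i.e.\ $\bar q_f=1$, which is indeed what the remainder of your argument uses.
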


\subsection{Counterexample of Theorem~\ref{th:genshape1}(2) for dependent $\{f_{n,i}\}_{i \in \N}$
}\label{subsec:counterexample}

 We define $Y_n:=1$ a.s.~and $f_{n,i}:=f_{n,1}$ for all $i>1$ and $n \in \N$ (where $\{f_{n,1}\}_{n \in \N}$ is 
an i.i.d.~sequence distributed according to $\mu$).
We construct the sequence $\{X_n\}_{n \in \N}$ as $X_n:=g(H_n)$, for a suitable choice of an i.i.d.~sequence $\{H_n\}_{n \in \N}$ 
and a function $g$.

Let $\{H_n\}_{n \in \N}$ be an i.i.d.~sequence 
such that 
$\pr(H_n=i):=1/2^{i}$ for all $i \in \N\setminus\{0\}$. We define $n_i:=i(i+1)/2$ for all $i \in \N$, hence
$\pr(H_n \le n_{i+1}|H_n > n_i)=1-1/2^{i+1}$ for all $i \in \N$.

Let us define  
$T_{k}:=\min\{i \colon H_i > n_k\}$ for all $k \in \N$ (clearly $T_0=1$);
note that $T_k \sim \mathcal{G}(1/2^{n_k})$. Since
$\lim_{m \to +\infty} \pr(T_k \le m, H_{T_k} \le n_{k+1})=\pr(H_{T_k} \le n_{k+1})=1-1/2^{k+1}$ then
there exists $\tau_k \in \N$ such that $\pr(T_k \le \tau_k, H_{T_k} \le n_{k+1}) \ge 1-1/2^k$.
The sequence $\{\tau_k\}_{k \ge 1}$ can be always constructed iteratively as a nondecreasing sequence.
It is not difficult to prove, by using the Borel-Cantelli Lemma, that the event
$\Omega_0 := \bigcap_{k \in \N} \{H_{T_k} \le n_{k+1}, T_k \le \tau_k\}$ has positive probability.

We are now ready to define $g(i):= (k+1)! \prod_{j=0}^{k} \tau_j$ for all $i=n_{k}+1, \ldots, n_{k+1}$ (for all $k \in \N$);
$X_n:=g(H_n)$ for all $n \in \N$.
On $\Omega_0$ we have
\[
 \frac{X_{T_k}}{\sum_{i=1}^{T_k} X_i} \ge \frac{X_{T_k}}{(T_k-1)g(n_{k})+ X_{T_k}} \ge
 \frac{(k+1)! \prod_{j=0}^{k} \tau_j}{\tau_{k}\, k! \prod_{j=0}^{k-1} \tau_j+(k+1)! \prod_{j=0}^{k} \tau_j}
 \ge \frac{k+1}{k+2}.
\]
Roughly speaking, this means that, on $\Omega_0$, for every $\varepsilon>0$ infinitely often the last generation represents
at least a fraction $1-\varepsilon$ of the entire population. Whence
due to our choice of $\{f_{n,i}\}_{i \in \N}$, on $\Omega_0$, for every Borel set $A \subseteq [0,1]$ such that $\mu(A)>0$
and for every $\varepsilon>0$, we have $Z_n(A)/Z_n \ge 1-\varepsilon$ infinitely often.

\section{Proofs}
\label{sec:proofs}

We note that, for any fixed left interval $I$,  $\{Z_{2n}(I)\}_{n \in \N}$ is a random walk on $\N$ (with increments
depending on the position).
More precisely it is the queuing process 
associated to the i.i.d.~increments $\{\widetilde X_n-Y_n\}_{n \in \N}$, as defined 
by equation~\eqref{eq:XYtilde}; indeed
$$Z_{2n+2}(I)-Z_{2n}(I)=\max(-Z_{2n}(I),\widetilde X_{n+1}- Y_{n+1}).$$ 
We denote by $\{S_{n}(I)\}_{n \in \N}$ the random walk with independent increments,
where $S_n(I):=\sum_{i=1}^n (\widetilde X_i- Y_i)$.
The drift of this random walk is $\E[\widetilde X_i-  Y_i]=\E[\mu(I) X_i-Y_i]$ which is independent of
$i$. 

At time $0$ we have $S_0=Z_0(I)=0$
and for all $n$
\begin{equation}\label{eq:duality}
 Z_{2n}(I)=S_n(I)-\min_{i \le n} S_i(I)=\max_{i \le n} \sum_{k=i+1}^n (\widetilde X_k - Y_k), \quad \forall n \in \N.
\end{equation}

\noindent
By the Duality Principle, $Z_{2n}(I)$ and $\max_{k \le n} S_k(I)$ have the same law. Since $S_0(I)=0$ then 
$\min_{i \le n} S_i(I) \le 0$, hence $ Z_{2n}(I) \ge S_n(I)$ for all $n \in \N$.

Define $d:=
\mathrm{GCD}(n \in \Z \colon \pr(\widetilde X-Y=n)>0)$; 
by elementary number theory
it is easy to show that, since $\E[X]\E[Y]>0$, when $\mu(I)>0$ the random walk $\{Z_{2n}(I)\}_{n \in \N}$ (resp.~$\{S_{n}(I)\}_{n \in \N}$)
is irreducible on the set $\{d n \colon n \in \N\}$ (resp.~$\{d n \colon n \in \Z\}$). 

We start with the classification of the random walk $\{Z_{2n}(I)\}_{n \in \N}$. 
\begin{pro}[\textbf{Recurrence and transience}]\label{pro:typesofrecurrence} 
 Let $I$ be a left interval such that $\pr(\widetilde X \not = Y)>0$.  Denote by $\tau_n$ the time spent at $0$ by the
 random walk $\{Z_{2i}(I)\}_{i \in \N}$ up to time $n$.
 The random walk 
 is 
 \begin{enumerate}
  \item transient if and only if $\E[\mu(I)X-Y]\in (0,+\infty]$, 
  in this case $\pr(\sup_{n \in \N} \tau_n< +\infty)=1$; 
  \item null recurrent if and only if $\E[\mu(I)X-Y]=0$, 
  in this case $\pr(\lim_{n \to +\infty} \tau_n/n = 0)=1$;
  \item positive recurrent if and only if $\E[\mu(I)X-Y]\in[-\infty,0)$, 
  in this case $\pr(\lim_{n \to +\infty} \tau_n/n > 0)=1$. 
  Moreover, as $n \to +\infty$  
$$Z_{2n}(I)
  \stackrel{dist}{\to} S_\infty(I),\text{  a.s., where  }S_\infty(I):=\sup_{n\geq 0} S_n(I)<\infty, \quad{\rm a.s.}.$$
 \end{enumerate}
\end{pro}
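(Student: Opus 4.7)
The plan is to use the duality formula $Z_{2n}(I)=S_n(I)-\min_{i\le n}S_i(I)$ together with its distributional dual $Z_{2n}(I)\stackrel{d}{=}\max_{k\le n}S_k(I)$, both recorded just before the statement. Since $\{Z_{2n}(I)\}_{n\in\N}$ is an irreducible Markov chain on $\{dn:n\in\N\}$, to classify it I only need to analyze the asymptotic behavior of the i.i.d.~random walk $\{S_n(I)\}$, for which the Strong Law of Large Numbers (SLLN) and the Chung--Fuchs theorem suffice, and then to read off the occupation-time statement from the appropriate ergodic theorem for Markov chains. The three cases are disjoint and exhaust all possibilities, so proving the three direct implications ``drift $>0 \Rightarrow$ transient'', ``drift $=0 \Rightarrow$ null recurrent'', ``drift $<0 \Rightarrow$ positive recurrent'' automatically gives the reverse implications as well.

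Case (1), $\E[\mu(I)X-Y]\in(0,+\infty]$. By the SLLN (extended to $+\infty$ when $\E[\widetilde X]=+\infty>\E[Y]$), $S_n(I)/n\to \E[\widetilde X-Y]>0$, so $S_n(I)\to+\infty$ a.s. In particular $\inf_{i\ge 0}S_i(I)>-\infty$ a.s., hence
\[
Z_{2n}(I)=S_n(I)-\min_{i\le n}S_i(I)\ge S_n(I)-\inf_{i\ge 0}S_i(I)\longrightarrow +\infty \qquad\text{a.s.}
\]
Thus $Z_{2n}(I)=0$ only finitely often, which gives both transience and $\sup_n\tau_n<\infty$ a.s.

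Case (3), $\E[\mu(I)X-Y]\in[-\infty,0)$. By the SLLN, $S_n(I)\to-\infty$ a.s., so $S_\infty(I):=\sup_{n\ge 0}S_n(I)<\infty$ a.s. The duality $Z_{2n}(I)\stackrel{d}{=}\max_{k\le n}S_k(I)$ and monotone convergence of the right-hand side to $S_\infty(I)$ give $Z_{2n}(I)\stackrel{d}{\to}S_\infty(I)$. The existence of a proper limit law forces positive recurrence of the irreducible chain, and the ergodic theorem then yields $\tau_n/n\to\pi(\{0\})$ a.s., where $\pi(\{0\})=\pr(S_\infty(I)=0)\ge \pr(\widetilde X_1-Y_1\le 0)>0$ because of the hypothesis $\pr(\widetilde X\ne Y)>0$ combined with $\E[\widetilde X-Y]<0$.

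Case (2), $\E[\mu(I)X-Y]=0$. The Chung--Fuchs theorem applied to the i.i.d.~random walk $S_n(I)$ (whose step is non-degenerate by the assumption $\pr(\widetilde X\ne Y)>0$) gives $\limsup_n S_n(I)=+\infty$ and $\liminf_n S_n(I)=-\infty$ a.s. The second statement shows that $S_n(I)$ attains new minima infinitely often, and at each such epoch $Z_{2n}(I)=0$; so the chain is recurrent. The first statement gives $\max_{k\le n}S_k(I)\to+\infty$ a.s., hence by duality $Z_{2n}(I)\to+\infty$ in distribution, which is incompatible with the existence of a stationary distribution, ruling out positive recurrence; therefore the chain is null recurrent. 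The claim $\tau_n/n\to 0$ a.s.~then follows from the classical ergodic theorem for null recurrent Markov chains (the fraction of time at any fixed state goes to $0$).

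The main obstacle is the zero-drift case: the SLLN alone does not locate $S_n$, so one must invoke Chung--Fuchs (for recurrence and for $\limsup S_n=+\infty$), and the $\tau_n/n\to 0$ statement requires an ergodic-theoretic argument for null recurrent chains rather than the simple a.s.~convergence used in the other two cases. A secondary care point is to treat the extended-value expectations ($\E[\widetilde X]=+\infty$ in case (1) and $\E[Y]=+\infty$ in case (3)), but in each of these the SLLN still yields $S_n/n\to \pm\infty$ a.s., so the arguments above go through without change.
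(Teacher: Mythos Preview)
Your proof is essentially correct and follows the same overall strategy as the paper: use the duality $Z_{2n}(I)=S_n(I)-\min_{i\le n}S_i(I)$ together with $Z_{2n}(I)\stackrel{d}{=}\max_{k\le n}S_k(I)$, invoke the SLLN for $S_n(I)$, and classify the chain according to the sign of the drift. The paper phrases cases (2) and (3) in terms of weak descending ladder epochs $T_1$ and cites Feller (Chapter XII.2) for $\E[T_1]=\infty$ resp.\ $\E[T_1]<\infty$, whereas you argue via Chung--Fuchs and the (non)existence of a proper distributional limit; these are equivalent routes.

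There is one slip in your case (3). The inequality $\pr(S_\infty(I)=0)\ge \pr(\widetilde X_1-Y_1\le 0)$ is backwards: since $S_\infty(I)\ge S_1(I)=\widetilde X_1-Y_1$, the event $\{S_\infty(I)=0\}$ is \emph{contained in} $\{\widetilde X_1-Y_1\le 0\}$, so you only get $\pr(S_\infty(I)=0)\le \pr(\widetilde X_1-Y_1\le 0)$. This does not damage the conclusion, because once you have established positive recurrence of an irreducible chain, every state automatically has positive stationary mass; in particular $\pi(\{0\})>0$ with no further computation needed. Alternatively, in the paper's language $\pi(\{0\})=1/\E[T_1]>0$ since $\E[T_1]<\infty$.
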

 Note that the case where $\pr(\widetilde X = Y)=1$ is trivial, since it means that $\mu(I)=1$ and $X=Y=c \in (0,+\infty)$ a.s.;
 thus, $Z_{n}(I)$ equals $c$ when $n$ is odd and $0$ when $n$ is even.

\begin{proof}
Recall the relation between the random walks $\{S_n(I)\}_{n \in \N}$ and $\{Z_{2n}(I)\}_{n \in \N}$ given by equation~\eqref{eq:duality}. In particular
the return times to $0$ of the second process are the weak descending ladder times of the first one, that is, the times $n$ such that $S_n(I) \le S_i(I)$ 
for all $i \le n$. 
We denote by $\{T_i\}_{i \in \N}$ the sequence of intervals between two consecutive weak descending ladder times of $\{S_n(I)\}_{n \in \N}$
(that is, the times between two consecutive returns at $0$ of $\{Z_{2n}(I)\}_{n \in \N}$).
Note that $\{T_i\}_{i
\in \N}$ are i.i.d random variables and $T_1=\min\{n\geq 1: \,\,
S_n\leq 0\}$.

\begin{enumerate}
  \item 
  If $\E[\mu(I) X-Y] >0$ (either finite or
  infinite) then, by the SLLN, $S_n(I) \to +\infty$ a.s.,
  hence the same happens to the process $\{Z_{2n}(I)\}_{n \in
  \N}$ since $Z_{2n}(I) \ge S_n(I)$ for all $n \in\N$ (see
  equation~\eqref{eq:duality} and the remark afterwards).
 This implies that $\inf_{n \ge 0}
  S_n(I)=:S_{-\infty}>-\infty$ a.s. and the Markov chain
  $\{Z_{2n}(I)\}_{n \in \N}$  is transient. As a consequence
 $\pr(\sup_{n \in \N} \tau_n< +\infty)=1$.
    \item 
  When the distribution of $\widetilde X_n -Y_n$ is not degenerate (that is, it is not $\delta_0$) then according to 
    \cite[Theorem 4, Ch.VI.10]{cf:Feller2} $\{S_{n}(I)\}_{n \in \N}$ is a recurrent random walk on the set 
  $\{d n \colon n \in \Z\}$. 
    Since there are infinitely many reachable states on the left (as well as on the right) of the origin, 
     we have
  $\pr(T_1<\infty)=1$ so that $\{Z_{2n}(I)\}_{n \in \N}$ is
  recurrent.
  Moreover $\E[T_1]=+\infty$
  (see \cite[Theorem 2(i), Ch.XII.2]{cf:Feller2})   and this implies the null recurrence
   of $\{Z_{2n}(I)\}_{n \in \N}$. It is well known  that, for a
   recurrent random walk $\pr(\lim_{n \to +\infty} \tau_n/n = 1/\E[T_1])=1$ where, in this case, $1/\E[T_1]=0$.
    \item   
    We apply again the SLLN to $\{S_n(I)\}_{n \in \N}$ to deduce that $S_n(I) \to -\infty$ a.s., hence 
  $\sup_{n \ge 0}S_n(I) =:S_\infty(I)<+\infty$ and
  $\E[T_1]$ is finite (see \cite[Theorem 2(ii), Ch.XII.2]{cf:Feller2}). Thus,
  $\{Z_{2n}(I)\}_{n \in \N}$ is positive recurrent. 
  As before $\pr(\lim_{n \to +\infty} \tau_n/n = 1/\E[T_1])=1$ where, in this case, $1/\E[T_1]>0$.
  
  It is clear that $\max_{i \le n} S_n(I) \uparrow S_\infty(I)$ a.s.~and the conclusion follows by equation~\eqref{eq:duality}
  (see also \cite[Ch.VI.9]{cf:Feller2}).
 \end{enumerate}
\end{proof}

The next proposition deals with the a.s.~convergence of $Z_n(I)/n$ as $n\to \infty$.
\begin{pro}[\textbf{Law of large numbers}]\label{pro:slln}
\begin{enumerate}
  \item For every interval $I \subseteq [0,1]$,
 \begin{equation}\label{eq:shape}
   Z_{{n}}(I)/n \to {\frac{1}{2} \Big(} \mu(I \cap (f_c,1])\E[X] +\ident_I(f_c) \E[\mu([0,f_c])X-Y]\Big),\quad{\rm a.s.}
  \end{equation}
 \item If $\E[X-Y] \in [-\infty,0]$ then, for all sets $A \subseteq [0,1]$, $Z_n(A)/n \to 0$ almost surely as $n \to +\infty$.
\item
 Let $I$ be a left interval and $J \subseteq [0,1]$ be such that $I\cap J=\emptyset$ and $\mu(J)>0$.
 Suppose that $\E[\mu(I)X-Y] \in (0,+\infty]$.
  Then, a.s.,
 $Z_n(J)$ is nondecreasing eventually as $n \to +\infty$ and $Z_n(J)/n\to \mu(J)\E[X] /2$.
\end{enumerate}
\end{pro}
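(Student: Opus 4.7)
The plan is to prove part (1) first for left intervals via the duality identity~\eqref{eq:duality}, extend to arbitrary intervals by set-difference, and then derive parts (2) and (3) using the same toolkit.

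Step 1 (left intervals). For a left interval $I$ with $\mu(I)>0$, \eqref{eq:duality} reads $Z_{2m}(I)=S_m(I)-\min_{k\le m}S_k(I)$; the i.i.d.\ SLLN applied to $S_m(I)=\sum_{k=1}^m(\widetilde X_k-Y_k)$ yields $S_m(I)/m\to \E[\mu(I)X-Y]$ a.s.\ (extended to $\pm\infty$ when needed). A routine $\varepsilon$-argument shows $\min_{k\le m}S_k(I)/m\to \min(\E[\mu(I)X-Y],0)$ a.s., whence $Z_{2m}(I)/(2m)\to \tfrac12\max(\E[\mu(I)X-Y],0)$ a.s. A short case analysis against the three possible positions of $I$ relative to $f_c$ verifies that this matches the right-hand side of \eqref{eq:shape} for left intervals. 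Odd indices $n=2m+1$ are handled via $Z_{2m+1}(I)=Z_{2m}(I)+\widetilde X^I_{m+1}$ together with $\widetilde X^I_{m+1}/m\to 0$ a.s.\ (trivial in the integrable case; in the non-integrable case the limit is already $+\infty$ and sandwich bounds suffice).

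Step 2 (general intervals and part (2)). For a general interval $I\subseteq[0,1]$, write $I=I_2\setminus I_1$ with $I_1\subseteq I_2$ both left intervals. Then $Z_n(I)=Z_n(I_2)-Z_n(I_1)$ pointwise, and subtracting the two limits supplied by Step~1 gives an expression that is easily checked against \eqref{eq:shape} for $I$ in each subcase according to the location of $f_c$. Part (2) then follows immediately from (1): when $\E[X-Y]\in[-\infty,0]$ the definition of $f_c$ gives $f_c=+\infty$, so \eqref{eq:shape} applied to the left interval $[0,1]$ yields $Z_n([0,1])/n\to 0$ a.s., and the monotonicity $Z_n(A)\le Z_n([0,1])$ extends the statement to arbitrary $A\subseteq[0,1]$.

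For (3), denote by $B_{2n}(J)=\sum_{k=1}^n \widetilde X^J_k$ the cumulative births in $J$ up to time $2n$ and by $K_{2n}(J)$ the cumulative deaths, so that $Z_{2n}(J)=B_{2n}(J)-K_{2n}(J)$. The i.i.d.\ SLLN gives $B_{2n}(J)/(2n)\to \mu(J)\E[X]/2$ a.s., reducing the claim to showing that $K_{2n}(J)$ stays a.s.\ bounded. The key order-theoretic observation is that since $I$ is a left interval disjoint from $J$, every species in $J$ has fitness at least as large as every species in $I$; hence a death at step $2k$ can fall in $J$ only when $Y_k>Z_{2k-1}(I)$, an event contained in $\{Z_{2k}(I)=0\}$. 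Under the hypothesis $\E[\mu(I)X-Y]\in(0,+\infty]$, Proposition~\ref{pro:typesofrecurrence}(1) asserts transience of $\{Z_{2k}(I)\}$ with a.s.\ only finitely many zero-visits, so at most finitely many $Y_k$ (each a.s.\ finite) ever contribute, and $K_{2n}(J)$ is bounded a.s. This simultaneously yields the eventual monotonicity of $Z_n(J)$ and, together with the SLLN for births and the odd-index argument from Step~1, the asymptotic $Z_n(J)/n\to\mu(J)\E[X]/2$. The main obstacle is precisely this identification of the event on which a death in $J$ can occur; once it is pinned to zero-visits of $\{Z_{2k}(I)\}$, the transience statement of Proposition~\ref{pro:typesofrecurrence}(1) delivers the conclusion cleanly.
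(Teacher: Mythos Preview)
Your overall strategy coincides with the paper's: establish the left-interval case via the duality identity \eqref{eq:duality} and the SLLN, lift to general intervals by set-difference, and deduce (2) and (3) from (1) together with transience. Your treatment of (3), pinning deaths in $J$ to the event $\{Z_{2k}(I)=0\}$ and invoking Proposition~\ref{pro:typesofrecurrence}(1), is a slightly more explicit version of the paper's one-line argument that $Z_n(I)\to+\infty$ a.s.\ forbids further removals outside $I$, but the content is the same.

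There is, however, one genuine gap in Step~1. When $\Delta:=\E[\mu(I)X-Y]=-\infty$ (which the paper's standing assumptions allow, namely when $\E[Y]=+\infty>\E[X]$), your subtraction fails: you correctly get $S_m(I)/m\to-\infty$ and $\min_{k\le m}S_k(I)/m\to-\infty$, but then
\[
\frac{Z_{2m}(I)}{m}=\frac{S_m(I)}{m}-\frac{\min_{k\le m}S_k(I)}{m}
\]
is an indeterminate $-\infty-(-\infty)$, and no ``routine $\varepsilon$-argument'' extracts the limit $0$ from it. The paper closes this case by truncation: replace $Y_k$ by $Y_k\wedge M$ with $M$ large enough that the new drift lies in $(-\infty,0)$, note that the resulting queueing process $\widehat Z_{2n}(I)$ dominates $Z_{2n}(I)$ pointwise, and apply the already-proved finite-negative-drift case to $\widehat Z$. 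You should insert this step.
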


\begin{proof}
\ \\
 \begin{enumerate}
  \item 
  \textbf{Left interval $I$}. For a left interval $I$ equation~\eqref{eq:shape} becomes
 \begin{equation}\label{eq:shapefundamental}
     \frac{Z_{n}(I)}{n} \to \frac{1}{2} \E[\mu(I) X-Y] \vee 0,\quad{\rm
     a.s.}
    \end{equation}
  Let 
  $\Delta:=\E[\mu(I)X-Y] \in [-\infty,+\infty]$;  
  by the SLLN we have that \textbf{(a)}  $S_n(I)/n \to \Delta$ a.s..
  We separate two cases.

 \noindent $\bullet$ \textbf{$\Delta \in [-\infty, 0]$}.
Since $\liminf_n S_n(I)=-\infty$, then \textbf{(b)} for every $n_0$ there is a larger weak descending ladder time, 
i.e.~$n\ge n_0$ such that $S_n(I) \le S_k(I)$ for all $k \le n$.
 Hence almost every trajectory satisfies both \textbf{(a)} and \textbf{(b)}; let us consider such a trajectory.
 
 When $0 \ge \Delta>-\infty$ then 
 for every $\varepsilon>0$ there exists $n_0$ such that for every $n \ge n_0$ we have $|S_n(I)/n-\Delta| < \varepsilon/2$.
 Consider a weak descending ladder time $n_1 \ge n_0$;
 it is clear that, for every $n \ge n_1$ then $\min_{k \le n}S_k(I)=S_{k_n}(I)$ for some $k_n$ such that $n \ge k_n \ge n_1 \ge n_0$.

When $\Delta = 0$, then for every $n \ge n_1$ we have, by equation~\eqref{eq:duality},
 \[
 \begin{split}
  \frac{|Z_{2n}(I)|}n &= \Big |\frac{S_n(I)}n - \frac{\min_{k \le n}S_k(I)}n \Big | =
  \Big |\frac{S_n(I)}n - \frac{S_{k_n}(I)}n \Big | 
  \le \frac{|S_n(I)|}n + \frac{|S_{k_n}(I)|}{k_n} \cdot \frac{k_n}n < \varepsilon
 \end{split}
 \]
since $n \ge n_0$ and $n \ge k_n \ge n_0$.\\
When $-\infty< \Delta < 0$, take
 $\varepsilon \in (0, -2 \Delta)$.
 For every $n \ge n_1$ we have
 \[
  \Delta +\varepsilon/2 \ge \frac{S_n(I)}n \ge \frac{S_{k_n}(I)}n  \ge \frac{S_{k_n}(I)}{k_n} \ge \Delta -\varepsilon/2
 \]
since $S_{k_n}(I) \le n (\Delta +\varepsilon)<0$ (for all $n>0$) and
$n \ge k_n \ge n_0$. From the above chain of inequalities we obtain
$|S_{k_n}(I)/n -\Delta| \le \varepsilon/2$. Using again
equation~\eqref{eq:duality} we have
 \[
 \begin{split}
  \frac{|Z_{2n}(I)|}n &
  \le \Big |\frac{S_n(I)}n -\Delta \Big | + \Big |\frac{S_{k_n}(I)}{n} -\Delta \Big | < \varepsilon.
 \end{split}
 \]
 If $\Delta=-\infty$, consider the
 process $\{\widehat Z_{2n}\}_{n \in \N}$ constructed by using
 $Y_k \wedge M$ instead of $Y_k$ in such a way that
 $\E[\mu(I)X-Y\wedge M] \in (-\infty,0)$. We have $0 \le
 Z_{2n}(I)/n \le \widehat Z_{2n}(I)/n \to 0$ almost surely as $n \to
 \infty$.

 We are left to prove
 that
\begin{equation}\label{ou1}
  Z_{2n+1}(I)/n  \to 0,\quad{\rm a.s.}
\end{equation} 
Remember that, for all $\varepsilon
 >0$, $\E[X]<\infty$ iff $\sum_{n \in \N} \pr(X >
 \varepsilon n)<\infty$; thus, by the Borel-Cantelli's Lemma,
  $\E[X]<\infty$ implies $\pr(\liminf_n \{X_n  \le \varepsilon
  n\})=1$.
Thus
\begin{equation}\label{toke}
 \frac{|Z_{2n+1}(I)-Z_n(I)|}{n} \le \frac{X_{n+1}}n \to 0,\quad {\rm a.s.}
\end{equation}
so that from ${Z_{2n}/n}\to 0$, a.s., the
convergence (\ref{ou1}) follows.

\noindent $\bullet$ \textbf{$\Delta \in (0, +\infty]$}. 
By the SLLN, $S_n(I) \to +\infty$ a.s. and   $\inf_{n \ge 0} S_n(I)=:S_{-\infty}>-\infty$, almost surely.
  By \textbf{(a)}, using equation~\eqref{eq:duality}, we have
  $Z_{2n}(I)=S_n(I)-S_{-\infty}$ eventually a.s., which implies
  $Z_{2n}(I)/n  \to \E[\mu(I)X-Y]$, almost surely.
  
As before we are left to show 
\begin{equation}\label{ou}
  Z_{2n+1}(I)/n  \to \E[\mu(I)X-Y],\quad{\rm a.s.}
\end{equation}
If  $\E[X]<\infty$ then we use (\ref{toke}) to obtain (\ref{ou}).

\noindent
If $\E[X]=\infty$, note that 
 $Z_{2n+1}(I) \ge Z_{2n}(I)$,
thus $Z_{2n+1}(I)/n\to +\infty$ almost surely.
\medskip

\noindent
\textbf{Generic interval $I$}. Consider the the two left
intervals $I_1:=\{x \in [0,1] \colon \exists y \in I,\, x \le y\}$
and $I_2:=\{x \in [0,1] \colon x < y, \, \forall y \in I\}$. Clearly
$I_2 \subseteq I_1$, $I_1 \setminus I_2=I$, whence
$Z_{n}(I)=Z_{n}(I_1)-Z_{n}(I_2)$.
For $I_1$ and $I_2$, the convergence in equation~\eqref{eq:shapefundamental} holds.

If $f_c\in I$, then $I_2\subseteq [0,f_c)$  so
that $E[\mu(I_2)X-Y]\leq 0$ and by the result for left intervals, $Z_n(I_1)/n\to 0$ almost surely. Therefore, 
$$\lim_n \frac{Z_n(I)}{n}=\lim_n \frac{Z_n(I_1)}{n}=\frac{E[\mu(I_1)X-Y]}{2}=\frac{E[\mu(I\cup
[0,f_c))X-Y]}{2},\quad {\rm a.s.}.$$ 

Suppose $f_c\not\in
I$. If $I\subseteq [0,f_c)$, then $E[\mu(I_1)X-Y]\leq 0$ and
 ${Z_n(I)/ n}\to 0$ almost surely. If,
$I\subset (f_c,1]$, then by equation~\eqref{eq:shapefundamental} we have
$$\frac{Z_n(I)}{n}\to \frac{E[\mu(I)X-Y]}{2}=\frac{E[\mu(I\cap (f_c,1])X-Y]}{2},\quad {\rm a.s.}$$ and the statement now follows.

\item In this case $f_c=+\infty$. Whence $Z_n/n \to 0$ a.s., thus the same holds for $Z_n(A)$ for every $A \subseteq [0,1]$.

\item
The result follows from the fact that
$\lim_{n \to +\infty} Z_n(I) =+\infty$ almost surely.
Since $Z_n(I)\to +\infty,$ a.s.~then no species with fitness from $[0,1]\setminus I$ are
removed, eventually. 
By the SLLN, the number of births in
 $J$ (up to time $n$) divided by $n$ goes to its expectation almost surely as $n
\to +\infty$ and this yields the claim.
\end{enumerate}

\end{proof}

\begin{proof}[Proof of Theorem~\ref{th:gen1}]
(1) Take $I=[0,f]$ and suppose 
that $Z_{2 n_0}(I)>0$.
Since $Z_{2n+1}(I) \ge Z_{2n}(I)$ for all $n$, in
order to check whether the process hits the origin or not, it is
  enough to consider the process
  $\{Z_{2n}(I)\}_{n \in \N}$.
By Proposition~\ref{pro:typesofrecurrence}(1) $\{Z_{2n}(I)\}_{n \in \N}$ is transient and
  $\pr(Z_{2n}({I})>0,\
  \forall n>n_0|Z_{2 n_0}=i)>0$ (and this probability does not depend on $n_0$), for all $i\ge1$ such that $\pr(Z_{2 n_0}=i)>0$. 
Therefore we have survival. Moreover $Z_{2n}(I)\to\infty$ a.s., 
thus from $Z_{2n+1}(I)\ge Z_{2n}(I)$ we have $Z_n(I)\to\infty$ almost surely.
  
Suppose now that there are species with fitness
$f$ alive at time $n_0$. If $Z_n(I)$ is never empty for $n \ge n_0$, then the
  fitness $f$ survives.  Thus the probability of
  survival of $f$ equals to the probability that $Z_n(I)$ is always
  positive. 

We now show that $Z_n(\{f\})\to\infty$.
If $Z_{2n}(\{f\})>Z_{2n+2}(\{f\})$ then
$$Z_{2n}({I}) \ge
Z_{2n}(\{f\})>Z_{2n+2}(\{f\})={Z_{2n+2}(I)}.$$
 For all $M>0$ let $n(M)$ be a random integer such that
  $Z_{2n}({I}) \ge M$ for all $n \ge n(M)$. For
  every $n \ge n(M)$ either $Z_{2n}(\{f\}) \le Z_{2n+2}(\{f\})$
  or $Z_{2n+2}(\{f\}) \ge M$. In particular,
  if $Z_{2n_1}(\{f\}) \ge M$ for some $n_1>n(M)$, 
  then it is true for  all $n \ge n_1$.
Consider the first (random) time $n_1 \ge n(M)$ when
  $Z_{2n_1}([0,f)]=0$. 
  If $n_1=\infty$ then, from time $n(M)$ on, $Z_n(\{f\})$ is non
  decreasing and strictly increasing infinitely many often; indeed,
  species of fitness $f$
  are created infinitely many times a.s.~(since $\mu(\{f\})>0$) and these species will
  be never removed after time $n(M)$ (since $Z_{2n}([0,f)]>0$ for every $n\geq n(M)$). 
  If, on the other hand, $n_1<\infty$ then $Z_{2n}(\{f\})
  \ge M$
   for all $n \ge n_1$ and the result follows.
  
  
(2)  By
  Proposition~\ref{pro:typesofrecurrence}, 
  applied to $I=[0,f]$, the process
  $\{Z_{2n}(I)\}_{n\in\mathbb{N}}$ is recurrent and so
  $Z_n(I)=0$ infinitely often, almost surely.
\end{proof}

 \begin{proof}[Proof of Corollary~\ref{rem:dynamics}]
The statement (1) follows from
the equation~\eqref{eq:killing}. For every left interval $I$ such that
$\E[\mu(I)X-Y] > 0$ by equation~\eqref{eq:shapefundamental}
we have $Z_n(I) \to +\infty$ a.s.~and there are no more particles
killed in $I^c$ eventually as $n \to +\infty$. This
implies the first statement of (3) and the statement (5).
Conversely, if $\E[\mu(I)X-Y] \le 0$ then by
equation~\eqref{eq:killing} we have $K_{n}(I^c)\sim
-\frac12\E[\mu(I)X-Y]$ almost surely as $n \to +\infty$.
This implies (4). Finally, if $\E[\mu(I)X-Y] = 0$
then $Z_n(I)=0$ i.o.~almost surely, whence by
$\E[X-Y]>0$ it follows that $K_n(I^c)\to +\infty$ as $n \to
+\infty$ almost surely. This implies the second
statement of (3) (applied to the case $F(f_c)\E[X]-\E[Y]=0$) and the
statement (2) (applied to the case $F(f_c-)\E[X]-\E[Y]=0$).
\end{proof}


\begin{proof}[Proof of Theorem~\ref{th:genshape1}]
\begin{enumerate}
   \item  It is enough to consider $A \subseteq [0,1] \cap [0,f_c)$ since there are no births in $A \setminus ([0,1] \cap [0,f_c))$ almost
  surely.
  It follows immediately from  Proposition~\ref{pro:slln}(1); the uniform convergence
  comes from the inequality $Z_n(A) \le Z_n(I)$ for all $A \subseteq I$ and $n \in \N$.
  \item
  By Proposition~\ref{pro:slln}(1) we have that
$Z_{n}/n \to +\infty$ almost surely.
    Recall that $f_{n,1},f_{n,2},\ldots$ are
  i.i.d.
  We start by noting that
  \[
   \sum_{i=1}^{\lfloor(n+1)/2\rfloor}(\widetilde X_i-Y_i)\le Z_n(A)\le \sum_{i=1}^{\lfloor(n+1)/2\rfloor}\widetilde X_i,
  \]
  whence
  \[
   \frac{ \sum_{i=1}^{\lfloor(n+1)/2\rfloor}(\widetilde X_i-Y_i)}{ \sum_{i=1}^{\lfloor(n+1)/2\rfloor} X_i}\le \frac{Z_n(A)}{Z_n}\le 
   \frac{ \sum_{i=1}^{\lfloor(n+1)/2\rfloor}\widetilde X_i}{ \sum_{i=1}^{\lfloor(n+1)/2\rfloor} (X_i-Y_i)}.
  \]
Moreover, since a.s.
\[
 \frac{\sum_{i=1}^kY_i}{\sum_{i=1}^kX_i}\stackrel{k\to\infty}{\to}0,
\]
it suffices to prove that a.s.
\[
 \frac{\sum_{i=1}^k\widetilde X_i}{\sum_{i=1}^kX_i}\stackrel{k\to\infty}{\to}\mu(A).
 \]
We recall that $\widetilde X_i$ is a sum of $X_i$ Bernoulli random variables of parameter $\mu(A)$
and that the family of these Bernoulli variables is independent of the family  $\{X_n\}_{n\in\N}$.
Thus 
\[
 \frac{\sum_{i=1}^k\widetilde X_i}{\sum_{i=1}^kX_i}=\frac{\mathcal B(\sum_{i=i}^kX_i,\mu(A))}{\sum_{i=1}^kX_i},
 \]
and we are therefore left to prove that a.s.
\begin{equation}\label{eq:bernoulli}
 \frac{\sum_{i=1}^{N_n}W_i}{N_n}\stackrel{n\to\infty}{\to}\mu(A),
 \end{equation}
whenever $\{N_n\}_{n\in\N}$ is a sequence of random variables
such that $N_n\to\infty$ a.s., $\{W_n\}_{n\in\N}$ is a sequence of independent Bernoulli random variables 
of parameter $\mu(A)$ and the two sequences are independent.

Define the sequence of stopping times $\tau_n:=\inf\{k \colon N_k \ge n\}$; clearly equation~\eqref{eq:bernoulli} holds
if and only if $\frac{\sum_{i=1}^{N_{\tau_n}}W_i}{N_{\tau_n}}\stackrel{n\to\infty}{\to}\mu(A)$.
By the Hoeffding's inequality
$$
\pr \Big(\big|\frac{\sum_{i=1}^{N_{\tau_n}}W_i}{N_{\tau_n}}-\mu(A)\big|\geq
\varepsilon \Big|N_{\tau_n}\Big)\leq 2\exp[-2\epsilon^2 N_{\tau_n}]\leq
2\exp[-2\epsilon^2 n],$$ 
so that after taking expectation (over $N_n$)
$$
\pr \Big(\big|\frac{\sum_{i=1}^{N_{\tau_n}}W_i}{N_{\tau_n}}-\mu(A)\big|\geq
\varepsilon \Big)\leq  2\exp[-2\epsilon^2 n],
$$ and the statement
follows from the Borel-Cantelli's Lemma.

%

 \item
 The a.s.~convergence $Z_n/n \to \E[X-Y]/2$ as $n \to +\infty$ comes from Proposition~\ref{pro:slln}.
 
  As for the second part, if $A$ is an interval then the claim follows trivially by applying
 Proposition~\ref{pro:slln} to $Z_{n}(I)$ and
 $Z_{n}$. 
  Let
$${\cB}:= \{\cup_{i=1}^k [a_i,b_i]: a_1< b_1<a_2<b_2<\cdots <b_k,
a_i,b_i\in\mathbb{Q},\quad k=1,2,\ldots \}.$$ Since there are
countable many intervals with rational endpoints,
\begin{equation}\label{borel2} \pr\Big( \frac{Z_n(B)}{Z_n}\to P(B),\quad \forall
B\in {\cB} \Big)=1.
\end{equation}
By the regularity of probability measures, it is easy to see that
for every Borel set $A$, for every $\varepsilon >0$ there exists
sets $B_1, B_2\in {\cB}$, both depending on $\varepsilon$ such that
$B_1\subset A \subset B_2$ and $P(B_2\setminus B_1)\leq
\varepsilon$. Thus, if $Z_n(B)/Z_n\to P(B)$ for every $B\in \cB$,
then for every $\varepsilon >0$
\[
\begin{split}
\limsup_n \frac{Z_n(A)}{Z_n} &\leq \limsup_n \frac{Z_n(B_2)}{Z_n}=P(B_2)\leq P(A)+\varepsilon,\\
\liminf_n \frac{Z_n(A)}{Z_n} 
&\geq \liminf_n \frac{Z_n(B_1)}{Z_n}=P(B_1)\geq
P(A)-\varepsilon,
\end{split}
\] so that equation~\eqref{borel2} implies
equation~\eqref{eq:shape1}.

By the above arguments, 
it suffices to show the
following: if $P_n$, $P$ are probability measures on $\mathbb{R}$,
so that for every Borel set $A$, $P_n(A)\to P(A)$, then
$\sup_t|F_n(t)-F(t)|=:\|F_n-F\|_\infty\to 0$, where $F_n$ and $F$ are the
corresponding distribution functions. Let $\{x_i\}$ be the set of
atoms on $P$, let $\{p_i\}$ be their masses and let
$H(t)=\sum_{i=1}^{\infty}p_i I_{(-\infty,t]}(x_i)$ be the
distribution function of the subprobability measure
$\sum_{i=1}^{\infty}p_i \delta_{x_i}$. Let, for every $i$,
$p_i^n=P_n(\{x_i\})$, by assumption $p_i^n\to p_i$. Let $H_n$ be the
distribution function of the subprobability measure
$\sum_{i=1}^{\infty}p^n_i \delta_{x_i}$. Since
$P_n(\{x_1,x_2,\ldots\})\to P(\{x_1,x_2,\ldots\})$, from Scheffe's
theorem, it follows that $\|H_n-H\|_\infty\to 0$. Since for every $t$,
$F_n(t)\to F(t)$, we have that $(F_n(t)-H_n(t))\to (F(t)-H(t))$ for
every $t$. The function  $F(t)-H(t)$ is a continuous distribution
function of a subprobability measure, so the pointwise convergence
implies uniform convergence.  So, $\|F_n-F\|_\infty\leq
\|H_n-H\|_\infty+\|(F_n-H_n)-(F-H)\|_\infty\to 0.$
 
\end{enumerate}
\end{proof}

The following Lemma is well known and we include it for the sake of completeness.

\begin{lem}\label{lem:genfunct}
Consider two $\N$-valued random variables $X$ and $Y$ on $\N$ with laws $\rho_X$ and $\rho_Y$ respectively; let $\Phi_X(z)=\E[z^X]=\sum_{i=0}\rho_X(i)z^i$ and 
 $\Phi_Y(z)=\E[z^Y]=\sum_{i=0}\rho_Y(i)z^i$ the corresponding generating functions. Let $\{X_i\}_{i \in \N}$ be a i.i.d.~sequence of random  variables with law $\rho_X$.
 Finally let $\{Z_i\}_{i \in \N}$ be a generic sequence of $\N$-valued random variables with laws $\{\rho_{Z_n}\}_{n \in \N}$
 and generating functions $\{\Phi_{Z_n}\}_{n \in \N}$.
 \begin{enumerate}
  \item If $Z=\sum_{i \in \N} \ident_{\{X=i\}} Z_i$ then its law is $\rho_Z=\sum_{i \in \N} \rho_X(i) \rho_{Z_i}$ and the generating function is 
  $\Phi_Z=\sum_{i \in \N} \rho_X(i) \Phi_{Z_i}$.
    \item If $Z=\sum_{i=1}^n Z_i$, where $\{Z_1, \ldots, Z_n\}$ are independent, then $\rho_Z=\rho_{Z_1}\ast\cdots\ast \rho_{Z_n}$ (where $\ast$ denotes
    the usual convolution) and  $\Phi_Z=\prod_{i=1}^n \Phi_{Z_i}$.
  \item If $Z=\sum_{i =0}^Y X_i$ then the law of $Z$ is $\rho_Z= \sum_{i \in \N} \rho_Y(i) (\ast^i \rho_{X})$ (where $\ast^i \rho_{X}$ is the convolution
  of $i$ copies of $\rho_{X}$) and $\Phi_Z=\Phi_Y \circ \Phi_X$.
 \end{enumerate}
\end{lem}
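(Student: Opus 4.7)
The plan is to prove (1) as the core identity via conditioning on $X$, observe (2) as the classical convolution/product rule for independent sums, and then deduce (3) by combining (1) and (2) after rewriting the random sum in the form covered by (1).

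For (1), I would use the (implicit) independence of $X$ from the family $\{Z_i\}$ and partition on the value of $X$. Since $Z=Z_i$ on $\{X=i\}$, this gives
$$
\rho_Z(k)=\sum_{i\in\N}\pr(X=i,\,Z_i=k)=\sum_{i\in\N}\rho_X(i)\,\rho_{Z_i}(k),
$$
and analogously
$$
\Phi_Z(z)=\E[z^Z]=\sum_{i\in\N}\E\bigl[z^{Z_i}\ident_{\{X=i\}}\bigr]=\sum_{i\in\N}\rho_X(i)\,\Phi_{Z_i}(z),
$$
the exchange of sum and expectation being legitimate for $z\in[0,1]$ by non-negativity.

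For (2), I would induct on $n$. The base case $n=2$ is the classical statement: for independent $Z_1,Z_2$, partitioning on $Z_1$ yields $\rho_{Z_1+Z_2}=\rho_{Z_1}\ast\rho_{Z_2}$, while independence of $z^{Z_1}$ and $z^{Z_2}$ gives $\Phi_{Z_1+Z_2}=\Phi_{Z_1}\,\Phi_{Z_2}$. The inductive step is immediate by grouping $Z_1+\cdots+Z_{n-1}$ into a single summand, independent of $Z_n$, and applying the $n=2$ case.

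For (3), I would set $S_i:=\sum_{j=1}^{i}X_j$ (with $S_0:=0$), so that $Z=\sum_{i\in\N}\ident_{\{Y=i\}}\,S_i$. Because $Y$ is independent of $\{X_j\}_{j\in\N}$, it is independent of the derived family $\{S_i\}$, so (1) with $X$ replaced by $Y$ and $Z_i$ by $S_i$ yields
$$
\rho_Z=\sum_{i\in\N}\rho_Y(i)\,\rho_{S_i},\qquad \Phi_Z=\sum_{i\in\N}\rho_Y(i)\,\Phi_{S_i}.
$$
Part (2) identifies $\rho_{S_i}=\ast^i\rho_X$ and $\Phi_{S_i}(z)=\Phi_X(z)^i$, and substituting gives $\Phi_Z(z)=\sum_i\rho_Y(i)\,\Phi_X(z)^i=\Phi_Y(\Phi_X(z))$, as asserted. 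There is no serious obstacle here, since this is textbook material; the only point deserving attention is the implicit independence hypothesis used in (1) and invoked again in (3) through the independence of $Y$ from $\{X_j\}$.
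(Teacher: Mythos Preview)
Your proof is correct and follows essentially the same route as the paper's: both treat (1) by conditioning on $X$, (2) via the factorization $\E[z^{\sum Z_i}]=\prod_i\E[z^{Z_i}]$ under independence, and (3) by conditioning on $Y$ and then invoking (1) and (2). The paper is simply terser---declaring (1) ``straightforward'' and handling (2) in one line rather than by induction---but the content is the same.
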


\begin{proof}
 \begin{enumerate}
  \item It is straightforward.
  \item The explicit expression of the law is trivial and $\rho_Z=\E[z^{\sum_{i=1}^n Z_i}]=\E[{\prod_{i=1}^n z^{Z_i}}]=\prod_{i=1}^n \E[{z^{Z_i}}]$ where the last
  equality comes from the independence.
  \item The explicit expression of the law follows by conditioning $Z$ on $Y$. Then (1) and (2) yield the conclusion.
  \end{enumerate}

\end{proof}

\begin{proof}[Proof of Proposition~\ref{pro:BPcase}]
Suppose that $Z_{2n+k}([0,f])=0$ for some $k \ge 0$ then  $k_0:=\min\{k\colon Z_{2n+k}([0,f])=0\}<+\infty$ and $k_0$ must be even.
Hence there are no species in $[0,f]$ at some even time larger than $2n+1$ if and only if there is just one species at some odd time in $[0,f]$ (larger
than $2n+1$).
The probability of having just one species at some odd time (larger than $2n+1$) provided there are $i$ species at time $2n+1$ equals
the probability of having no species  at some even time (larger than $2n+1$) provided there are $i-1$ species at time $2n+1$; thus
the case $j=2$ follows from the case $j=1$.

 Let us take $j=2$ an consider the process $\{Z_{2n+1}([0,f])\}_{n \in \N}$.
 Until all species in $[0,f]$ are gone, each time a species is removed it is replaced by a random number of species (in $[0,f]$) 
 with generating function $\Psi_a$. This is equivalent to a branching process with generating function $\Psi_a$. 
 Equation~\eqref{eq:BP1} follows from standard results in Branching Process theory.
The equivalence $\bar q_f <1 \Longleftrightarrow F(f) \le 1/\Phi^\prime(1)$ follows from the assumption $\pr(X_k=1)<1$ and 
from the equality $\Psi_f^\prime(1)=F(a)\Phi^\prime(1)$.

It is straightforward to prove that there is almost sure (temporary) extinction in $[0,f_c]$ if and only if $F(f_c) \le 1/\Phi^\prime(1)$.

The case of the process $\{Z_n([0,f))\}_{n \in \N}$ is completely analogous. In this case we just need to note that 
$F(f_c^-)\Phi^\prime(1) \le 1$.
\end{proof}


\section*{Acknowledgements}
The authors are grateful to Mauro Ghidelli for carefully reading the manuscript and for useful
remarks.
The first and last authors acknowledge support by INdAM and Prin 2015.
The second author is supported by the Estonian institutional
research funding IUT34-5.

\end{document}